\newtheorem{theorem}{Theorem}
\newtheorem{lemma}{Lemma}
\newtheorem{corollary}[lemma]{Corollary}
\newcommand{\eps}{\varepsilon}
\newcommand{\dtheta}[1][\tau]{\mathrm d_{#1}\theta}
\title{A Hybrid of Two Theorems of Piatetski-Shapiro}
\author{Angel Kumchev}
\address{Department of Mathematics\\ Towson University\\ Towson, MD 21252\\ U.S.A.}
\email{akumchev@towson.edu}
\author{Zhivko Petrov}
\address{Faculty of Mathematics and Informatics\\ Sofia University\\ Sofia\\5 J. Bourchier, 1164 Sofia,  Bulgaria}
\email{zhpetrov@fmi.uni-sofia.bg}
\date{}
\begin{document}

\maketitle

\begin{abstract}
Let $c > 1$ and $0 < \gamma < 1$ be real, with $c \notin \mathbb N$. We study the solubility of the Diophantine inequality
\[ \left| p_1^c + p_2^c + \dots + p_s^c - N \right| < \varepsilon \]
in Piatetski-Shapiro primes $p_1, p_2, \dots, p_s$ of index $\gamma$---that is, primes of the form $p = \lfloor m^{1/\gamma} \rfloor$ for some $m \in \mathbb N$.
\end{abstract}

\section{Introduction}

Let $0<\gamma<1$ be a fixed real number. I.I. Piatetski-Shapiro \cite{PS53} was the first to consider the question whether the sequence
\[ \mathcal N_\gamma = \big\{ n \in \mathbb N : n = \big\lfloor m^{1/\gamma} \big\rfloor \text{ for some } m \in \mathbb N \big\} \]
contains infinitely many primes. He proved that when $\gamma > 11/12$, one has the asymptotic formula
\begin{equation}\label{eq:i.1}
\pi_\gamma(N) = \frac {N^\gamma}{\log N} \big( 1 + O\big( (\log N)^{-1} \big) \big)
\end{equation}
for the number $\pi_\gamma(N)$ of primes $p \le N$ that belong to $\mathcal N_\gamma$. This result has attracted a lot of attention, and a number of authors \cite{BHR95,HB83,Ji93,Ji94,Ko67,Ko85,Ku99b,LR92,RS01,RW01} have extended the range of $\gamma$ for which $\pi_\gamma(N) \to \infty$ to $\gamma > 205/243 = 0.8436\dots$ (see Rivat and Wu~\cite{RW01}). In the process, prime numbers $p \in \mathcal N_\gamma$ have become known as \emph{Piatetski-Shapiro primes} (of index $\gamma$).

Another problem proposed by Piatetski-Shapiro \cite{PS52} around the time he proved \eqref{eq:i.1} deals with the solubility in primes $p_1, p_2, \dots, p_s$ of the Diophantine inequality
\begin{equation}\label{eq:i.2}
\big| p_1^c + p_2^c + \dots + p_s^c - N \big| < \eps,
\end{equation}
where the exponent $c > 1$ is not an integer, $\eps > 0$ is a fixed small number, and $N$ is a large real. If $H(c)$ denotes the least integer $s$ such that \eqref{eq:i.2} has solutions for all sufficiently large $N$, then Piatetski-Shapiro proved that
\begin{equation}\label{eq:3}
H(c) \le c(4\log c + O(\log\log c)) 
\end{equation}
for large $c$; he showed also that $H(c) \le 5$ when $1 < c < 3/2$. These results can be considered analogues of results of L.-K. Hua from the 1930's and the 1940's that dealt with the classical Waring--Goldbach problem. In particular, Hua proved the appropriate variant of \eqref{eq:3} for integer $c$. The paper \cite{PS52} went unnoticed for almost forty years  until the work of Tolev \cite{To92} that established the bound $H(c) \le 3$ for $1 < c < 15/14$. The latter result has motivated a series of improvements \cite{BW14,CZ02,Ku09,KL02,KN98} culminating in the recent result of Baker and Weingartner \cite{BW14} that $H(c) \le 3$ for $1 < c < 10/9$. There has also been further work \cite{BW13, CZ03, CZ07, Ga03, LZ16, LS13} on extending the range of $c$ in Piatetski-Shapiro's result on sums of five powers of primes: the best result in that direction,  also due to Baker and Weingartner \cite{BW13}, states that $H(c) \le 5$ for $1 < c \le 2.041$, $c \ne 2$.

Note that the sequence of Piatetski-Shapiro primes of index $\gamma$ is a ``thin'' sequence of primes (and gets thinner as $\gamma$ decreases). As researchers in additive prime number theory have asked whether different additive questions about  the primes can be resolved in prime numbers from thin sets, Piatetski-Shapiro primes have become a favorite ``test case'': see \cite{AG17,AG18, BF92,Ji95,Ku97,Li03,Mi16,Pe03,ZZ05} for some results on solubility of classical additive problems in Piatetski-Shapiro primes. In the present note, motivated by recent work on solubility of Diophantine inequalities in primes from special sequences (for example, \cite{Di17b,Di17a,To17}), we study the solubility of the Diophantine inequality \eqref{eq:i.2} in Piatetski-Shapiro primes. Our main result is the following.

\begin{theorem}\label{thm1}
Let  $c > 5$, $c \notin \mathbb N$,  and  $1-\rho < \gamma < 1$, where $\rho = (8c^2 + 12c + 12)^{-1}$. Then for
$s \ge 4c \log c + \frac 43c + 10$ and sufficiently large $N$, the inequality
\begin{equation}\label{eq:thm1}
\big| p_1^c + p_2^c + \dots + p_s^c - N \big| < (\log N)^{-1}     
\end{equation} 
has solutions in prime numbers $p_1, p_2, \dots, p_s \in \mathcal N_\gamma$. 
\end{theorem}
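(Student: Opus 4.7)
My plan is the Davenport--Heilbronn form of the circle method, combined with the standard device for detecting Piatetski--Shapiro primes inside an exponential sum. Set $P = (N/s)^{1/c}$ and $\eps = (\log N)^{-1}$, and introduce a Tolev-style smooth kernel $K(\alpha)$ whose inverse Fourier transform majorises $\mathbf 1_{(-\eps,\eps)}$ and is concentrated near $\alpha=0$. For the weighted generating sum
\[
S_\gamma(\alpha) = \sum_{P/2 < p \le P} (\log p)\,\mathbf 1_{\mathcal N_\gamma}(p)\, e(\alpha p^c),
\]
the number of solutions to \eqref{eq:thm1} with each $p_i\in (P/2,P]$ is bounded below by $R(N) = \int_{\mathbb R} S_\gamma(\alpha)^s K(\alpha) e(-\alpha N)\, d\alpha$. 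It suffices to show that $R(N)$ attains its expected order of magnitude $\gg \gamma^s \eps\, N^{s\gamma/c-1}(\log N)^{s-C}$. I would split $\mathbb R$ into major arcs $\mathfrak M = \{|\alpha|\le\tau\}$, minor arcs $\mathfrak m = \{\tau<|\alpha|\le T\}$, and trivial tails $\{|\alpha|>T\}$, with $\tau,T$ suitable powers of $P$; the tails are absorbed by the rapid decay of $K$.

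To expose the Piatetski--Shapiro condition I use the identity $\mathbf 1_{\mathcal N_\gamma}(n) = \lfloor -n^\gamma\rfloor - \lfloor -(n+1)^\gamma\rfloor$ together with the sawtooth decomposition $-\lfloor x\rfloor = -x+\tfrac12 - \psi(x)$, where $\psi(x)=\{x\}-\tfrac12$, so that $S_\gamma(\alpha) = \gamma S_1(\alpha) + S_2(\alpha)$, with $S_1$ carrying the smooth weight $p^{\gamma-1}$ and $S_2$ collecting the discrepancies $\psi(-(n+1)^\gamma)-\psi(-n^\gamma)$. A Vaaler approximation of $\psi$ of length $H$ then reduces $S_2$ to twisted exponential sums
\[
T_h(\alpha) = \sum_{P/2 < p \le P} (\log p)\, e\bigl(\alpha p^c + h p^\gamma\bigr), \qquad 1\le |h|\le H.
\]
On $\mathfrak M$, the smooth piece $S_1(\alpha)$ is evaluated by partial summation from the prime number theorem plus a standard stationary-phase analysis, matching the continuous integral $\gamma\int_{P/2}^P u^{\gamma-1}e(\alpha u^c)\,du$, while the $\psi$-piece is controlled by a mean-value bound that uses only the local density of Piatetski--Shapiro primes. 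Raising to the $s$-th power and identifying the singular integral produces the expected main term.

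The heart of the argument is the minor-arc estimate, and this is where the hypothesis $\gamma > 1-\rho$ is needed. I would establish a pointwise bound $|S_\gamma(\alpha)| \ll P^{1-\sigma}$ (with $\sigma>0$ depending on $c$) by applying a Vaughan or Heath-Brown identity to each $T_h(\alpha)$ to reduce to Type I and Type II bilinear sums, and then treating those by van der Corput's $k$-th-derivative test (or a suitable exponent pair) applied to the compound phase $\alpha x^c + hx^\gamma$. Coupled with a Hua-type mean value of order $2\lfloor 2c\log c\rfloor$---the source of the $4c\log c$ term in the admissible range of $s$---this pointwise bound yields
\[
\int_{\mathfrak m} |S_\gamma(\alpha)|^s K(\alpha)\,d\alpha = o\bigl(\gamma^s \eps\, N^{s\gamma/c-1}(\log N)^{s}\bigr).
\]
The main obstacle is precisely this estimate: the compound phase of $T_h(\alpha)$ admits a stationary point whenever $|h|\asymp |\alpha|\, x^{c-\gamma}$, so the Vaaler cutoff $H$ must be balanced delicately against the savings from van der Corput differencing and against the admissible range of $\alpha\in\mathfrak m$. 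Optimising this three-way trade-off is what produces the precise value $\rho = (8c^2+12c+12)^{-1}$.
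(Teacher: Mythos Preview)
Your overall framework---Davenport--Heilbronn, the sawtooth splitting $S_\gamma = S_0 + S_1$, Vaaler's approximation, and a Heath-Brown/Vaughan identity followed by $r$-th derivative estimates for the bilinear sums---is exactly what the paper uses to obtain the pointwise minor-arc bound $|S_\gamma(\alpha)| \ll P^{1-\rho+\eps}$ (this is the paper's Lemma~\ref{lem:S1a}). Your attribution of the constant $\rho$ to the balance in the Type I/Type II analysis is also correct.

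The gap is in the mean-value step. You invoke a ``Hua-type mean value of order $2\lfloor 2c\log c\rfloor$'' for $S_\gamma$, but no such inequality is available for a non-integral exponent $c$: the Weyl-differencing proof of Hua's lemma relies on $c\in\mathbb N$, and for fractional $c$ the standard substitute, going back to Davenport--Heilbronn and to Piatetski-Shapiro himself, is the \emph{diminishing ranges} device, which you do not mention. The paper does not prove or cite any Hua-type bound. Instead it writes $s = 2t + 2u + 1$ with $t = \lceil 2c\log c\rceil$ and $u = \lceil \tfrac23 c + \tfrac12\rceil + 2$, keeps the $2u+1$ variables in a single range $X_0$, and places the remaining $2t$ variables in ranges $X_1 > X_2 > \cdots > X_t$ with $X_j \asymp X_{j-1}^{1-1/c}$. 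On the minor arcs one factor of $S(\theta;X_0)$ is removed by the pointwise bound saving $\rho$; the remaining integral is majorised by a count of solutions over all of $\mathcal N_\gamma$. At that point the $2u$ surviving factors of $T(\theta;X_0)$ are estimated by a second, stronger pointwise bound $T(\theta;X_0)\ll X^{1-\nu+\eps}$ with $\nu = (c^2+3c+2)^{-1}$ (Corollary~\ref{lem:T}), and the final integral $\int \prod_{j=1}^t |T(\theta;X_j)|^2$ is reduced to a diagonal count because the diminishing ranges force $x_{2j-1}=x_{2j}$ in any solution of the associated inequality. The term $4c\log c$ comes from $2t$, and the term $\tfrac43 c$ from $2u$; the latter is dictated by the saving $\nu$, not by $\rho$.

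So your plan with all $p_i$ in a single interval $(P/2,P]$ cannot close as written: you need to introduce the diminishing ranges for $2t$ of the variables, and you need the separate pointwise bound for the unweighted sum $T(\theta;X)$ over $\mathcal N_\gamma$ (which saves roughly eight times as much as the prime sum) to treat the middle block of $2u$ variables.
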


We remark that this theorem represents also a slight improvement on Piatetski-Shapiro's original bound \eqref{eq:3} for $H(c)$. In that regard, the bound on $s$ in Theorem \ref{thm1} can be compared with recent results by Wooley and the first author \cite{KW16, KW17}, who obtained similar improvements on the aforementioned result of Hua on the classical Waring--Goldbach problem. Such a comparison suggests that one may be able to further reduce the upper bound on $H(c)$ by establishing analogues for Diophantine inequalities of some technical lemmas from \cite{KW16, KW17} that count solutions of Diophantine equations with variables in diminishing ranges. We do not pursue such improvements here, since our main focus is on the hybrid nature of our results, but we intend to return to this aspect of the problem in the future.

We study also the solubility of the ternary inequality \eqref{eq:thm1} in Piatetski-Shapiro primes and establish the following variant of Tolev's result in \cite{To92}.

\begin{theorem}\label{thm2}
Let $\gamma < 1 < c$ and $15(c-1) + 28(1-\gamma) < 1$. Then for sufficiently large $N$, the inequality
\[ \big| p_1^c + p_2^c + p_3^c - N \big| < (\log N)^{-1} \]
has solutions in prime numbers $p_1, p_2, p_3 \in \mathcal N_\gamma$. 
\end{theorem}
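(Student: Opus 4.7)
Set $X = (N/3)^{1/c}$, write $L = \log N$ and $\varepsilon = L^{-1}$, and consider the exponential sum
\[ T(\alpha) = \sum_{X < p \le 2X} (\log p)\,\chi_\gamma(p)\,e(\alpha p^c), \]
where $\chi_\gamma$ is the characteristic function of $\mathcal N_\gamma$ and $e(x) = e^{2\pi i x}$. The plan is to apply the Davenport--Heilbronn method for Diophantine inequalities, adapting Tolev's treatment \cite{To92} of the all-primes case by inserting the Piatetski--Shapiro indicator through the identity $\chi_\gamma(n) = \lfloor -n^\gamma \rfloor - \lfloor -(n+1)^\gamma \rfloor$ together with Vaaler's truncated Fourier expansion of the sawtooth function $\psi(x) = \{x\} - 1/2$. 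Concretely, I fix a standard kernel $K(\alpha)$ with Fourier transform supported in $[-\varepsilon,\varepsilon]$ and $\widehat K(x) \ge \varepsilon/2$ on $[-\varepsilon/2,\varepsilon/2]$, and study
\[ \mathcal I(N) = \int_{-\infty}^\infty T(\alpha)^3\,K(\alpha)\,e(-\alpha N)\,d\alpha; \]
a lower bound $\mathcal I(N) \gg \varepsilon X^{3\gamma-c}L^{-3}$ yields the desired solutions.

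I dissect $\mathbb R = \mathfrak M \cup \mathfrak m \cup \mathfrak t$ with $\mathfrak M = [-\tau,\tau]$ for $\tau = X^{-c}L^B$, a minor range $\mathfrak m = \{\tau < |\alpha| \le P\}$ with $P$ a small positive power of $X$, and a trivial range $\mathfrak t = \{|\alpha| > P\}$. On $\mathfrak M$, the Piatetski--Shapiro identity and partial summation reduce $T(\alpha)$ to $\gamma\sum_{X<p\le 2X}(\log p)\,p^{\gamma-1}\,e(\alpha p^c)$, and a further application of the prime number theorem replaces this by $\gamma L^{-1}\int_X^{2X} t^{\gamma-1}\,e(\alpha t^c)\,dt$; the singular-integral evaluation then produces the expected main term. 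On $\mathfrak t$, the rapid decay of $K$ together with the trivial bound $|T(\alpha)| \le X^\gamma L$ is enough. The whole argument will thus hinge on showing
\[ \sup_{\alpha \in \mathfrak m} |T(\alpha)| \ll X^\gamma L^{-A} \qquad(\text{any fixed } A > 0), \]
since this supremum bound combines with the Parseval-style mean value $\int_{\mathfrak m}|T(\alpha)|^2\,d\alpha \ll X^{2\gamma-c}L^B$ to give an acceptable bound for the minor-arc contribution to $\mathcal I(N)$.

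This supremum bound is the main obstacle. Truncating the Fourier series of $\psi(-n^\gamma)$ at height $H = X^\eta$ reduces the estimation of $T(\alpha)$ on $\mathfrak m$ to bounds for the ``main piece'' $\sum_{X<p\le 2X}(\log p)\,p^{\gamma-1}\,e(\alpha p^c)$ together with the family of twisted prime exponential sums
\[ U_h(\alpha) = \sum_{X<p\le 2X}(\log p)\,e\bigl(\alpha p^c + h p^\gamma\bigr), \qquad 1 \le |h| \le H. \]
The main piece is handled by Tolev's argument (Vaughan's identity plus exponent-pair estimates for the monomial phase $\alpha t^c$), and each $U_h$ is treated by Heath--Brown's identity, which yields Type I sums $\sum_{m\sim M}\sum_{n\sim N'}a_m\,e(\alpha(mn)^c + h(mn)^\gamma)$ with $m$ in a short range and Type II sums with both variables in intermediate ranges. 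The phase $\alpha t^c + h t^\gamma$ is then dispatched by Weyl differencing followed by a $B$-process of van der Corput type. Requiring that every split produced by Heath--Brown's identity falls into at least one admissible range, and optimising in $H$, produces a constraint of the form $\kappa_1(c-1) + \kappa_2(1-\gamma) < 1$; the constants arising from this balancing are exactly $\kappa_1 = 15$ (the standard cost of handling the Piatetski--Shapiro-free phase, slightly weaker than in \cite{To92}) and $\kappa_2 = 28$ (the additional cost of summing over the Fourier frequencies $h$), which yields precisely the hypothesis of the theorem.
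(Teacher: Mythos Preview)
Your overall architecture---Davenport--Heilbronn kernel, the decomposition $T = S_0 + S_1 + O(1)$ via the Piatetski--Shapiro identity, Vaaler's expansion of $\psi$ to introduce the harmonics $h$, a combinatorial identity to split the twisted sums $U_h$ into Type~I/Type~II pieces, and van~der~Corput estimates for the resulting bilinear sums---is exactly what the paper does. But the quantitative targets you set for the minor arcs are wrong, and this is not cosmetic.

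The claimed mean value $\int_{\mathfrak m}|T(\alpha)|^2\,d\alpha \ll X^{2\gamma-c}L^B$ is false once $\mathfrak m$ extends out to a positive power of~$X$: the bound from Lemma~\ref{lem10} is $|I|X^\gamma L^2 + X^{2\gamma-c}L^3$, and the first term dominates. (If you weight by the kernel instead, the diagonal contribution is still $\asymp X^\gamma$, not $X^{2\gamma-c}$.) Consequently your stated supremum target $\sup_{\mathfrak m}|T(\alpha)| \ll X^\gamma L^{-A}$ is \emph{insufficient}: combining it with the true mean value gives a minor-arc contribution of order $X^{2\gamma+\delta}$, which swamps the main term $\asymp \varepsilon X^{3\gamma-c}$ since $c>\gamma$. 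What is actually needed is a genuine power saving
\[
\sup_{\mathfrak m}|T(\alpha)| \ll X^{2\gamma - c - 2\delta},
\]
i.e.\ a saving of $X^{c-\gamma}$ over trivial. This is precisely what Corollary~\ref{lem:S} supplies, and the hypothesis $15(c-1)+28(1-\gamma)<1$ is exactly the condition under which the Type~I and Type~II estimates (Lemmas~\ref{lem:T1b} and \ref{lem:T2b}) deliver that power saving---not merely a logarithmic one. Your attribution of the constants is also off: the factor $14$ (not $15$) governs the untwisted sum $S_0$ via \eqref{eq:30}; the binding constraint $15(c-1)+28(1-\gamma)<1$ comes entirely from the Type~II bound for the \emph{twisted} sums, specifically the first condition in~\eqref{eq:25} with $\rho = c + 1 - 2\gamma + 3\delta$.

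Two smaller points: on the major arc you cannot simply ``reduce $T(\alpha)$ to $S_0$''---you must separately show $S_1(\alpha)$ is small there (Lemma~\ref{lem:S1b} in the paper); and the approximation of $S_0$ by the integral should not carry the factor $L^{-1}$.
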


We remark that the ranges of $\gamma$ and $c$ in this result can possibly be extended by an appeal to more sophisticated exponential sum estimates. However, since the resulting improvement is not likely to be great, we have chosen not to pursue such matters. On the other hand, the proof of Theorem \ref{thm2} can be easily adapted to establish the following companion results on the binary and quaternary inequalities. 

\begin{theorem}\label{thm4}
Let $\gamma < 1 < c$ and $8(c-1) + 21(1-\gamma) < 1$. For a large $Z$, let $\mathcal E(Z)$ denote the set of $N \in (Z/2, Z]$ for which the inequality
\[ \big| p_1^c + p_2^c - N \big| < (\log N)^{-1} \]
has no solutions in prime numbers $p_1, p_2 \in \mathcal N_\gamma$. Then the Lebesgue measure of $\mathcal E(Z)$ is $O(Z\exp(-(\log Z)^{1/4}))$.  
\end{theorem}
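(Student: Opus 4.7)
The strategy is to adapt the Davenport--Heilbronn circle method used for Theorem \ref{thm2}, replacing the pointwise minor-arc bound (insufficient with only two variables) by an $L^2(dN)$ estimate via Plancherel's theorem. Set $X = (Z/2)^{1/c}$ and $\eps = (\log Z)^{-1}$, introduce the Davenport--Heilbronn kernel $K(\tau) = (\sin(\pi\eps\tau)/(\pi\tau))^2$, whose inverse Fourier transform is the nonnegative triangle $(\eps - |u|)_+$ supported on $(-\eps, \eps)$, and put
\[ S_\gamma(\tau) = \sum_{X < p \le 2X,\, p \in \mathcal N_\gamma}(\log p) e(p^c\tau), \qquad R(N) = \int_{-\infty}^{\infty} S_\gamma(\tau)^2 K(\tau) e(-N\tau)\, d\tau. \]
Every $N$ with $R(N) > 0$ admits a representation of the required form, so it suffices to show $R(N) > 0$ outside a set of measure $\ll Z\exp(-(\log Z)^{1/4})$.

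Dissect $\mathbb R = \mathfrak M \cup \mathfrak m \cup \mathfrak t$ with $\mathfrak M = [-\tau_0, \tau_0]$ and $\mathfrak t = \{|\tau| > T\}$, for the same choice of $\tau_0$ and $T$ used to prove Theorem \ref{thm2}. On $\mathfrak M$, the Piatetski-Shapiro identity $\mathbf 1_{\mathcal N_\gamma}(n) = \lfloor -n^\gamma\rfloor - \lfloor -(n+1)^\gamma\rfloor$, together with Vaaler's approximation to the fractional parts, isolates the smooth density $\gamma n^{\gamma-1}$, and the standard major-arc analysis yields
\[ R_{\mathfrak M}(N) \gg \gamma^2 \eps X^{2\gamma - c}/\log X \]
uniformly for $N \in (Z/2, Z]$. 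The tail $\mathfrak t$ is controlled by pairing the $\tau^{-2}$ decay of $K$ against the trivial bound $|S_\gamma(\tau)| \le X$. Both arguments transplant directly from the proof of Theorem \ref{thm2}.

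The heart of the matter is the minor-arc contribution. By Plancherel's identity on $\mathbb R$,
\[ \int_{-\infty}^{\infty} |R_{\mathfrak m}(N)|^2\, dN = \int_{\mathfrak m} |S_\gamma(\tau)|^4 K(\tau)^2\, d\tau \le \Bigl(\sup_{\tau \in \mathfrak m}|S_\gamma(\tau)|\Bigr)^2 \int_{\mathfrak m} |S_\gamma(\tau)|^2 K(\tau)^2\, d\tau. \]
Under the hypothesis $8(c-1)+21(1-\gamma)<1$, the van der Corput / Vaughan machinery driving the minor-arc bound of Theorem \ref{thm2} produces a saving of the shape $\sup_{\mathfrak m}|S_\gamma(\tau)| \ll X^\gamma \exp(-c_1(\log X)^{1/2})$. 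For the residual second moment, expanding the square and using that the inverse Fourier transform of $K^2$ is supported in $(-2\eps, 2\eps)$ with $L^\infty$ norm $\ll \eps^3$ reduces the matter to counting pairs $(p_1,p_2) \in \mathcal N_\gamma^2\cap(X,2X]^2$ with $|p_1^c - p_2^c| \le 2\eps$; since $\eps$ is much smaller than $X^{c-1}$, the count is essentially diagonal and equals $\ll \eps^3 X^\gamma \log X$. Combining these two ingredients and applying Chebyshev's inequality against $R_{\mathfrak M}(N)^2 \asymp \eps^2 X^{4\gamma-2c}$ delivers the claimed exceptional measure.

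The main obstacle, as in Theorem \ref{thm2}, is the exponential-sum estimate on $\mathfrak m$: one must combine van der Corput's $k$-th derivative test applied to the smooth part of $S_\gamma$ with a careful Vaughan decomposition of the bilinear contributions arising from the $\psi$-terms in Vaaler's expansion, then balance the Vaaler truncation length against the dissection parameters $\tau_0$ and $T$ and the Type I/II cutoffs. The coefficients $8$ and $21$ in the hypothesis encode, respectively, the loss incurred in the Weyl-type treatment of the $\tau t^c$ phase and the combined loss from Vaaler's approximation and the bilinear decomposition; that they are smaller than their analogues $15$ and $28$ in Theorem \ref{thm2} reflects the slackness afforded by switching from a pointwise bound to an $L^2$ average in the binary case.
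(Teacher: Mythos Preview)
Your overall architecture (Plancherel on the minor arcs, Chebyshev to extract the exceptional set) matches the paper's, but two of your quantitative claims are wrong and together they hide where the bound $Z\exp(-(\log Z)^{1/4})$ actually comes from.

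First, you assert that the major-arc analysis gives $R_{\mathfrak M}(N)\gg \eps X^{2\gamma-c}$ \emph{uniformly} in $N$. With only two variables this is not available. On $\mathfrak M$ one has $S=V+O(X^{\gamma-2\eta(X)})$ with $\eta(X)=(\log X)^{-3/4}$, so $S^2-V^2\ll X^{2\gamma-2\eta(X)}$; integrating this over $\mathfrak M$ (length $\asymp X^{\gamma-c}$), or even after a Cauchy--Schwarz against $\int_{\mathfrak M}|S|^2$, still leaves an error that swamps the main term $\tau X^{2\gamma-c}$ by a power of $X$. The paper therefore treats the major-arc error by $L^2(dN)$ as well, proving
\[
\int_{Z/2}^{Z}\Big|\int_{\mathfrak M}\big(S(\theta;X)^2-V(\theta;X)^2\big)e(-N\theta)\,\dtheta\Big|^2\,dN\ \ll\ \tau^2X^{4\gamma-c-\eta(X)},
\]
and it is precisely this $\eta(X)$ saving (coming from the prime number theorem with Vinogradov's error term) that produces the $\exp(-(\log Z)^{1/4})$ in the statement. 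In your write-up this bottleneck has disappeared.

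Second, your minor-arc input $\sup_{\mathfrak m}|S_\gamma|\ll X^{\gamma}\exp(-c_1(\log X)^{1/2})$ is neither what the hypothesis delivers nor sufficient. Under $8(c-1)+21(1-\gamma)<1$ the relevant bound is the power saving $S(\theta;X)\ll X^{(3\gamma-c)/2-\delta}$ of Corollary~\ref{lem:Sa} (this is the $s=4$ case of \eqref{eq:61}, not the $s=3$ bound behind Theorem~\ref{thm2}). Plugging your logarithmic saving into your own Chebyshev step yields an exceptional measure of order $\eps X^{2c-\gamma}\exp(-2c_1(\log X)^{1/2})$, which exceeds $Z\asymp X^{c}$ by the factor $X^{c-\gamma}$ and does not close. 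With the correct power saving one gets $\tau^2X^{4\gamma-c-\delta}$ on the minor arcs, strictly smaller than the major-arc error above. Consequently your explanation of the constants $8$ and $21$ is also off: they do not encode any ``$L^2$ slackness'' but simply the weaker pointwise requirement $|S|^{2}\ll X^{3\gamma-c-2\delta}$ needed when $s-2=2$, exactly as in the quaternary Theorem~\ref{thm3}.
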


\begin{theorem}\label{thm3}
Let $\gamma < 1 < c$ and $8(c-1) + 21(1-\gamma) < 1$. Then for sufficiently large $N$, the inequality
\[ \big| p_1^c + p_2^c + p_3^c + p_4^c - N \big| < (\log N)^{-1} \]
has solutions in prime numbers $p_1, p_2, p_3, p_4 \in \mathcal N_\gamma$. 
\end{theorem}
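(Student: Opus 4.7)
The plan is to prove Theorem \ref{thm3} by adapting the Davenport--Heilbronn circle method argument used for Theorem \ref{thm2}, exploiting the fact that one now has the fourth power of the generating function at one's disposal on the minor arcs. Set $X = (N/4)^{1/c}$ and
\[
S(\alpha) = \sum_{\substack{X/2 < p \le X \\ p \in \mathcal N_\gamma}} (\log p) \, e(\alpha p^c).
\]
Introduce a smooth even kernel $K(\alpha)$ of rapid decay whose Fourier transform majorizes the indicator of $|t| < (\log N)^{-1}$ from below. Then the number of weighted solutions to the quaternary inequality with variables in $(X/2,X]$ is at least
\[
R(N) = \int_{-\infty}^{\infty} S(\alpha)^{4} \, e(-\alpha N) \, K(\alpha) \, d\alpha,
\]
and it suffices to prove $R(N) \gg X^{4\gamma - c}(\log X)^{-C}$ for some fixed constant $C$.

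To handle the Piatetski--Shapiro constraint I would use the identity $\mathbf 1_{\mathcal N_\gamma}(n) = \lfloor -n^\gamma \rfloor - \lfloor -(n+1)^\gamma \rfloor$, split off the expected density $\gamma n^{\gamma - 1}$, and treat the resulting sawtooth error through Vaaler's trigonometric approximation of $\psi$. This decomposes $S(\alpha)$ into a smooth average part and a short sum of hybrid exponential sums
\[
T(\alpha, h) = \sum_{X/2 < p \le X} (\log p) \, e(\alpha p^{c} + h p^{\gamma}), \qquad 1 \le |h| \le H.
\]
Partition the real line into major arcs $\mathfrak M = \{|\alpha| \le \tau\}$, a wide minor range $\mathfrak m = \{\tau < |\alpha| \le Y\}$, and a tail $|\alpha| > Y$. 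On $\mathfrak M$ the standard prime-number-theorem treatment of the smooth part produces the expected main term $\gg X^{4\gamma - c}(\log X)^{-1}$, and the tail is disposed of through the rapid decay of $K$.

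Everything thus reduces to a bound on $\mathfrak m$, for which the working inequality is
\[
\int_{\mathfrak m} |S(\alpha)|^{4} K(\alpha) \, d\alpha \le \Bigl( \sup_{\alpha \in \mathfrak m} |S(\alpha)| \Bigr)^{2} \int_{-\infty}^{\infty} |S(\alpha)|^{2} K(\alpha) \, d\alpha.
\]
A direct Parseval computation on $K$ bounds the $L^{2}$ integral by $O(X^{\gamma} \log X)$, so it suffices to prove the pointwise bound $\sup_{\alpha \in \mathfrak m} |S(\alpha)| \ll X^{(3\gamma - c)/2}(\log X)^{-A}$ for every fixed $A$. After the Vaaler decomposition this is equivalent to the same bound on $T(\alpha,h)$ uniformly for $1 \le |h| \le H$, and this is the main obstacle. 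I would attack it by decomposing $T(\alpha,h)$ into Type I and Type II bilinear sums through Heath-Brown's combinatorial identity and estimating each piece by the van der Corput method --- or more efficiently by exponent-pair estimates --- applied to the combined phase $\alpha y^{c} + h y^{\gamma}$. The hypothesis $8(c-1) + 21(1-\gamma) < 1$ is precisely the arithmetic balance between the Type I and Type II constraints that yields a power saving of the required size; it matches the hypothesis of Theorem \ref{thm4} because both the binary exceptional-set problem and the quaternary Diophantine inequality are driven by the same hybrid $L^{2}$-plus-sup estimate, whereas the ternary case in Theorem \ref{thm2} demands a larger power saving from $T(\alpha,h)$ and hence imposes a stricter condition on $c$ and $\gamma$.
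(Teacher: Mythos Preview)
Your plan mirrors the paper's proof: pull two copies of $S$ out of $\int_{\mathfrak m}|S|^4$ via the sup-norm, bound the remaining $L^2$-integral by the diagonal count (the paper's Lemma~\ref{lem10}), and feed in the pointwise estimate $|S(\theta;X)|\ll X^{(3\gamma-c)/2-\delta}$ obtained from a Vaaler expansion plus a Type~I/II decomposition of the sum over primes (the paper packages this as Corollary~\ref{lem:Sa}, proved through Vaughan's identity rather than Heath-Brown's, but either works in this range).

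Two small fixes are needed. First, with $X=(N/4)^{1/c}$ and $p_i\le X$ one has $p_1^c+\cdots+p_4^c\le 4X^c=N$, so $N$ sits on the boundary of the accessible range and the major-arc main term degenerates; take $X=(N/2)^{1/c}$ as the paper does so that $N$ lies strictly inside $(4(X/2)^c,4X^c)$. Second, on the major arc it is not enough to approximate the smooth part $S_0$ by an integral via the prime number theorem: you must also verify that the sawtooth piece $S_1(\theta;X)$ is negligible there, since your Vaaler-based minor-arc treatment of the hybrid sums $T(\alpha,h)$ does not apply when $|\theta|<X^{\gamma-c-\delta}$. The paper handles this separately in Lemma~\ref{lem:S1b}, and your sketch should acknowledge the need for that step.
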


\paragraph*{\em Notation.}
In this paper, $p, p_{1}, \dots $ will always denote primes. We also reserve $\varepsilon$ for a fixed small positive number that can be chosen arbitrarily small; its value need not be the same in all occurrences. As usual in analytic number theory, Vinogradov's notation $A \ll B$ means that $A = O(B)$, and we write $A \asymp B$ if $A \ll B \ll A$. Sometimes we use $x \sim X$ as an abbreviation for  $x \in (X/2, X]$.

We write $e(x) = e^{2\pi i x}$ and $\Psi_\gamma(n) = \psi(-(n+1)^\gamma ) - \psi( -n^\gamma )$, with $\psi(x) = x - \lfloor x \rfloor - 1/2$, and we define $(\alpha)_s$ recursively by $(\alpha)_0 = 1$ and $(\alpha)_s = (\alpha)_{s-1}(\alpha-s+1)$ for $s \ge 1$. We also write $\mathcal N_{\gamma}(X) = \mathcal N_{\gamma} \cap (X/2,X]$ and define several generating functions:
\begin{align*}
S(\theta; X)   & = \sum_{p \in \mathcal N_\gamma(X)} (\log p)e(\theta p^c), & T(\theta; X)   &= \sum_{n \in \mathcal N_\gamma(X)} e(\theta n^c), \\
S_{0}(\theta; X) & = \sum_{p \sim X} \gamma p^{\gamma - 1}(\log p) e(\theta p^c), & T_{0}(\theta; X) &= \sum_{n \sim X} \gamma n^{\gamma - 1} e(\theta n^c), \\
S_{1}(\theta; X) & = \sum_{p \sim X} \Psi_\gamma(p)(\log p)e(\theta p^c), & T_{1}(\theta; X) & = \sum_{n \sim X} \Psi_\gamma(n)e(\theta n^c), 
\end{align*}
\[ V(\theta; X)    = \gamma \int_{X/2}^X u^{\gamma-1} e( \theta u^c ) \, du. \] 

\section{Lemmas}

\begin{lemma}\label{l1}
Let $(a_n)$ be a sequence of complex numbers with $|a_n| \le A$. Then
\begin{equation*}
\sum_{n \in \mathcal N_{\gamma}(X)} a_n = \gamma \sum_{n \sim X} a_nn^{\gamma-1} + \sum_{n \sim X} a_n \Psi_\gamma(n) + O\big( A X^{\gamma-1} \big).
\end{equation*}
\end{lemma}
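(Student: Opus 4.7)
The plan is to rewrite the indicator function of $\mathcal N_\gamma$ via a floor/fractional-part identity and then extract the claimed main term by a Taylor expansion. The definition $n = \lfloor m^{1/\gamma} \rfloor$ is equivalent to $n^\gamma \le m < (n+1)^\gamma$, so $n \in \mathcal N_\gamma$ if and only if the interval $[n^\gamma,(n+1)^\gamma)$ contains an integer. Since $0 < \gamma < 1$, the length of this interval is $(n+1)^\gamma - n^\gamma < 1$ for all $n \ge 1$, so at most one such integer exists. Counting it gives
\[
\mathbf 1_{\mathcal N_\gamma}(n) \;=\; \lceil (n+1)^\gamma\rceil - \lceil n^\gamma\rceil \;=\; \lfloor -n^\gamma\rfloor - \lfloor -(n+1)^\gamma\rfloor.
\]

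Next I would feed the identity $\lfloor x\rfloor = x - \tfrac 12 - \psi(x)$ into both floors on the right. The constants $-\tfrac 12$ cancel, the linear pieces combine into $(n+1)^\gamma - n^\gamma$, and the two $\psi$-values assemble into exactly $\Psi_\gamma(n) = \psi(-(n+1)^\gamma) - \psi(-n^\gamma)$, yielding
\[
\mathbf 1_{\mathcal N_\gamma}(n) \;=\; (n+1)^\gamma - n^\gamma + \Psi_\gamma(n).
\]
Applying the mean value theorem to $t \mapsto t^\gamma$ on $[n,n+1]$ gives $(n+1)^\gamma - n^\gamma = \gamma \xi^{\gamma-1}$ for some $\xi\in(n,n+1)$, and since $\xi = n(1+O(1/n))$ with $\gamma - 1 < 0$, this expands as $\gamma n^{\gamma-1} + O(n^{\gamma-2})$.

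Multiplying by $a_n$ and summing over $n \sim X$, the first two terms reproduce $\gamma \sum_{n\sim X} a_n n^{\gamma-1}$ and $\sum_{n\sim X} a_n \Psi_\gamma(n)$, while the remainder is bounded by
\[
A \sum_{n\sim X} n^{\gamma-2} \;\ll\; A\cdot X\cdot X^{\gamma-2} \;=\; A X^{\gamma-1},
\]
exactly as claimed. I do not anticipate any real obstacle: the argument is entirely routine. The only point requiring a moment's care is checking that $(n+1)^\gamma - n^\gamma < 1$ throughout the range $n\sim X$, which is what keeps the indicator formula honest (the interval contains at most one integer) and lets us avoid case analysis.
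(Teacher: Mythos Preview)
Your proof is correct and follows exactly the same route as the paper: both arguments express the indicator of $\mathcal N_\gamma$ as $\lfloor -n^\gamma\rfloor - \lfloor -(n+1)^\gamma\rfloor = (n+1)^\gamma - n^\gamma + \Psi_\gamma(n)$ and then replace $(n+1)^\gamma - n^\gamma$ by $\gamma n^{\gamma-1}$ with an $O(n^{\gamma-2})$ error. You have simply filled in details the paper leaves implicit, including the check that the interval $[n^\gamma,(n+1)^\gamma)$ has length $<1$ so the floor difference really is the indicator.
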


\begin{proof}
This is immediate on noting that the indicator function of the set $\mathcal N_\gamma$ can be expressed as
\[ \big\lfloor -n^\gamma \big\rfloor - \big\lfloor -(n+1)^\gamma \big\rfloor = (n+1)^\gamma - n^\gamma + \Psi_\gamma(n). \qedhere \]
\end{proof}

In particular, Lemma \ref{l1} yields
\begin{equation}\label{2.1}
S(\theta; X) = S_0(\theta; X) + S_1(\theta; X) + O(1),
\end{equation}
and
\begin{equation}\label{2.1b}
T(\theta; X) = T_0(\theta; X) + T_1(\theta; X) + O(1).
\end{equation}

\begin{lemma}\label{l3}
Let $(a_n)$ be a sequence of complex numbers with $|a_n| \le A$. When $0 < \sigma < (2\gamma-1)/3$ and $X^{1 - \gamma + \sigma} \le H \le X^{\gamma - 2\sigma}$, one has
\[ \sum_{n \sim X} a_n\Psi_\gamma(n) \ll \sup_{\substack{Y \sim X\\ u \in \{0,1\}}} \sum_{1 \le |h| \le H} \Phi(h) \bigg| \sum_{Y < n \le X} a_ne(h(n+u)^\gamma) \bigg|+ AX^{\gamma - \sigma}, \]
where $\Phi(h) = \min\big( X^{\gamma-1}, |h|^{-1} \big)$.
\end{lemma}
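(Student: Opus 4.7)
The plan is to expand $\Psi_\gamma(n)$ via Vaaler's trigonometric approximation of $\psi$ and then bound the resulting contributions by exponential sum estimates.  Vaaler's inequality supplies coefficients $\alpha_h, \beta_h$ with $|\alpha_h| \ll |h|^{-1}$, $|\beta_h| \ll H^{-1}$, and a nonnegative trigonometric polynomial $R(x) = \sum_{|h|\le H} \beta_h e(hx)$ satisfying
\[ \Bigl| \psi(x) - \sum_{1\le|h|\le H} \alpha_h e(hx) \Bigr| \le R(x). \]
Applied at $x = -(n+u)^\gamma$ for $u = 0, 1$ and summed against $a_n$, this decomposes the target sum as
\[ \sum_{n\sim X} a_n \Psi_\gamma(n) = \sum_{1\le|h|\le H} \alpha_h D_h + O(A\Sigma), \]
where $D_h = \sum_{n \sim X} a_n [e(-h(n+1)^\gamma) - e(-hn^\gamma)]$ and $\Sigma = \sum_{n\sim X}[R(-n^\gamma) + R(-(n+1)^\gamma)]$.

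For the error, the constant Fourier coefficient contributes $\beta_0 X \ll X/H \le X^{\gamma-\sigma}$ by the hypothesis $H \ge X^{1-\gamma+\sigma}$. The nonzero frequencies contribute $H^{-1}\sum_{1\le|h|\le H} |\sum_{n\sim X}e(-h(n+u)^\gamma)|$.  Since the phase derivative $-h\gamma(n+u)^{\gamma-1}$ is uniformly small for $|h|\le X^{1-\gamma}/4$, Kusmin--Landau bounds those inner sums by $\ll|h|^{-1}X^{1-\gamma}$; for $X^{1-\gamma} \le |h| \le H$, van der Corput's $B$-process gives $\ll \sqrt{|h|}X^{\gamma/2} + |h|^{-1/2}X^{1-\gamma/2}$.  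The upper constraint $H \le X^{\gamma-2\sigma}$ (together with $\sigma < (2\gamma-1)/3$, which yields $\gamma > 1/2 + \sigma$) ensures that both regimes aggregate to $\ll HX^{\gamma-\sigma}$, producing $|A\Sigma| \ll AX^{\gamma-\sigma}$.

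The refinement of the naive coefficient $|h|^{-1}$ to $\Phi(h) = \min(X^{\gamma-1}, |h|^{-1})$ is the substantive step.  For $|h| > X^{1-\gamma}$ the trivial bound $|D_h| \le 2\sup_{u\in\{0,1\}}|\sum_{n\sim X}a_n e(-h(n+u)^\gamma)|$ combined with $|\alpha_h| \ll |h|^{-1} = \Phi(h)$ already matches the required form, taking $Y$ close to $X/2$ in the supremum.  For $1 \le |h| \le X^{1-\gamma}$, factor $a_n[e(-h(n+1)^\gamma) - e(-hn^\gamma)] = b_n d_n$ with $b_n = a_n e(-hn^\gamma)$ and $d_n = e(-h((n+1)^\gamma - n^\gamma)) - 1$.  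The mean value theorem gives $|d_n| \ll |h|X^{\gamma-1}$ and $|d_{n+1} - d_n| \ll |h|n^{\gamma-2}$, so Abel summation on $\sum_n b_n d_n$ yields
\[ |D_h| \ll |h|X^{\gamma-1}\sup_{Y\sim X}\Bigl|\sum_{X/2 < n \le Y}a_n e(-hn^\gamma)\Bigr|. \]
Multiplying by $|\alpha_h| \ll |h|^{-1}$ delivers the coefficient $X^{\gamma-1} = \Phi(h)$, and writing $\sum_{X/2<n\le Y} = \sum_{n\sim X} - \sum_{Y < n \le X}$ presents the bound via the lemma's partial sums with $u = 0$.

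The main obstacle will be the error analysis: the cumulative estimate on $\sum_h |\sum_n e(-h(n+u)^\gamma)|$ requires the Kusmin--Landau and van der Corput regimes to stitch together cleanly at $|h| \asymp X^{1-\gamma}$, and the constraint $H \le X^{\gamma-2\sigma}$ must be tight enough to absorb the van der Corput terms against the weight $H^{-1}$.  The Abel summation for small $|h|$ is a routine manipulation, but it is essential that both $|d_X|$ and $\sum|d_{n+1}-d_n|$ enjoy the same $|h|X^{\gamma-1}$ savings for the argument to close.
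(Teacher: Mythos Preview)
Your proposal is correct and follows essentially the same route as the paper's proof: Vaaler's approximation to $\psi$ for the initial decomposition (with the error term handled exactly as in \cite{GK91}, pp.~47--48, which you spell out via Kusmin--Landau and the second-derivative van der Corput estimate), followed by bounding each difference $D_h$ in two ways---by the triangle inequality to get the weight $|h|^{-1}$, and by Abel/partial summation to extract the factor $|h|X^{\gamma-1}$ from $e(h(n+1)^\gamma)-e(hn^\gamma)$---then taking the minimum to produce $\Phi(h)$. The paper states the two bounds globally and refers to p.~49 of \cite{GK91} for the partial summation, while you split explicitly at $|h|\asymp X^{1-\gamma}$; these are the same argument.
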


\begin{proof}
We follow closely the proof of Theorem 4.11 in \cite{GK91}.
Starting with Vaaler's approximation to $\psi$ (Theorem A.6 in \cite{GK91}), the argument on pp. 47--48 in \cite{GK91} yields
\[ \sum_{n \sim X} a_n\Psi_\gamma(n) \ll \sum_{1 \le |h| \le H} \frac 1{|h|} \bigg| \sum_{n \sim X} a_n\big( e(hn^\gamma) - e(h(n+1)^\gamma) \big) \bigg|+ AX^{\gamma - \sigma}. \]
A partial summation argument similar to that on p. 49 in \cite{GK91} then shows that the last sum is bounded by
\[ X^{\gamma-1} \sup_{Y \sim X} \sum_{1 \le |h| \le H} \bigg| \sum_{Y < n \le X} a_n e(hn^\gamma) \bigg|. \]
On the other hand, by the triangle inequality, the same sum is bounded by
\[ \max_{u \in \{0,1\}} \sum_{1 \le |h| \le H} \frac 1{|h|} \bigg| \sum_{n \sim X} a_n e(h(n+u)^\gamma) \bigg|. \]
\end{proof}

\begin{lemma}\label{lem3.r}
Let $F, N$ be large parameters, $N \le N_1 \le 2N$, and let $r \ge 2$ be an integer. Suppose that $f : [N,N_1] \to \mathbb R$ has $r$ continuous derivatives and satisfies
\[ FN^{-r} \ll \left| f^{(r)}(x) \right| \ll FN^{-r} \qquad (N \le x \le N_1). \]
Then one has 
\begin{equation}\label{eq:lem3.m} 
\sum_{N < n \le N_1} e(f(n)) \ll N^{1+\eps} \left( (FN^{-r})^{\nu} + N^{-\nu} + F^{-2\nu/r} \right), 
\end{equation}
where $\nu = \nu_r = (r^2-r)^{-1}$. 
\end{lemma}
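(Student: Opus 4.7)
This lemma is a standard $r$-th derivative estimate for exponential sums, essentially the sharp bound established by Heath-Brown via Wooley's sharp version of Vinogradov's mean value theorem (equivalently, via Bourgain--Demeter--Guth decoupling). The middle term $N^{-\nu}$ in the stated bound is slightly weaker than what the sharp theorem delivers, which keeps some of the technical manipulations elementary. The plan is to follow the standard template.

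First I would partition the range $(N, N_1]$ into $\ll N/M$ subintervals of length at most $M$, where $M \le N$ is a parameter to be optimized. On each subinterval $(N_0, N_0 + M]$, Taylor-expand the phase about $N_0$:
\[
f(N_0 + t) = \sum_{j=0}^{r-1} \frac{f^{(j)}(N_0)}{j!}\,t^j + E(t), \qquad |E(t)| \ll M^r F N^{-r}.
\]
With $M$ chosen so that $M^r F N^{-r} \ll 1$, the remainder contributes a negligible factor to $e(f(N_0+t))$, reducing the inner sum to an exponential sum with polynomial phase of degree $r-1$. Applying the sharp Vinogradov mean value bound to each such polynomial sum yields a saving of $M^{-\nu+\eps}$ with $\nu = 1/(r(r-1))$, provided the leading coefficient of the polynomial has the expected size $\asymp FN^{-r+1}$.

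Summing over the $\ll N/M$ subintervals and optimizing $M$ would then balance three competing constraints: the Taylor-error condition $M^r F N^{-r} \le 1$, the main Vinogradov saving $M^{-\nu}$, and an auxiliary bound covering those subintervals on which the relevant leading coefficient is anomalously small. These three regimes contribute respectively the three terms $(FN^{-r})^{\nu}$, $F^{-2\nu/r}$, and $N^{-\nu}$ in the stated bound. The main obstacle I expect is the careful bookkeeping in this optimization, together with handling the degenerate subintervals where $f^{(r-1)}(N_0)$ happens to be small; these can be dealt with by shifting the expansion point or by invoking a classical exponent-pair bound, but keeping the exponents clean throughout is technically delicate. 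The result itself is otherwise a routine packaging of well-known ingredients, and in applications I would simply cite a reference such as Heath-Brown's Steklov paper in place of writing out the proof in detail.
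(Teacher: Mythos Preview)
Your proposal is correct, and your closing remark --- that in practice one would simply cite Heath-Brown's Steklov paper --- is exactly what the paper does: for $r\ge 3$ it invokes Theorem~1 of \cite{HB17}, and for $r=2$ the classical second-derivative bound (Theorem~2.2 in \cite{GK91}), with no further argument. The sketch you outline (subdivide into short intervals, Taylor-approximate by a degree-$(r-1)$ polynomial, apply the sharp Vinogradov mean value bound, optimise) is precisely the strategy underlying Heath-Brown's result, so there is nothing to add.
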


\begin{proof}
The case $r=2$ of \eqref{eq:lem3.m} is classical: see Theorem 2.2 in \cite{GK91}, for example. When $r \ge 3$, the bound is a version of Theorem 1 in Heath-Brown \cite{HB17}. 
\end{proof}

\begin{lemma}\label{lem3.3}
Suppose that the hypotheses of Lemma \ref{lem3.r} hold for $r=3$. If $F \ge N$, one has 
\begin{equation}\label{eq:lem3.i} 
\sum_{N < n \le N_1} e(f(n)) \ll F^{1/6}N^{1/2} + N^{3/4}. 
\end{equation}
Moreover, if $|f''(x)| \ll FN^{-2}$ for all $x \in [N,N_1]$, one has
\begin{equation}\label{eq:lem3.ii} 
\sum_{N < n \le N_1} e(f(n)) \ll F^{1/6}N^{1/2} + NF^{-1/3}. 
\end{equation}
\end{lemma}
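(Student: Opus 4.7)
My plan is to prove both bounds by combining the Weyl--van der Corput A-process with the second derivative test, namely the $r=2$ case of Lemma~\ref{lem3.r}. For any integer shift $1 \le H \le N$, the A-process gives
\[
|S|^2 \ll \frac{N^2}{H} + \frac{N}{H}\sum_{1 \le h \le H}|S_h|,
\]
where $S_h = \sum_n e(f(n+h)-f(n))$. The differenced function has second derivative $|f''(n+h)-f''(n)| = h|f'''(\xi)| \asymp hFN^{-3}$ by the mean value theorem, so Lemma~\ref{lem3.r} at $r=2$ gives $|S_h| \ll N(hFN^{-3})^{1/2} + (hFN^{-3})^{-1/2}$. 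Summing over $h \le H$ and substituting back, I would obtain
\[
|S|^2 \ll \frac{N^2}{H} + F^{1/2}N^{1/2}H^{1/2} + F^{-1/2}N^{5/2}H^{-1/2}.
\]

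For part~(i), I balance the three terms above in two regimes according to the size of $F$. When $N \le F \le N^{3/2}$, the choice $H = N^2/F$ equalizes all three terms at $N^{3/2}$, yielding $|S| \ll N^{3/4}$; when $F > N^{3/2}$, the choice $H = NF^{-1/3}$ equalizes the first two terms at $NF^{1/3}$ and dominates the third, yielding $|S| \ll F^{1/6}N^{1/2}$. Combining the two regimes gives \eqref{eq:lem3.i}.

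For part~(ii), the regime $F > N^{3/2}$ is immediate from part~(i), since then $N^{3/4} \le F^{1/6}N^{1/2}$. In the remaining regime $N \le F \le N^{3/2}$, where the A-process estimate is too weak, I bypass it and instead exploit the new hypothesis $|f''| \ll FN^{-2}$ directly: since $f'''$ has constant sign, $f''$ is monotonic on $[N,N_1]$, so I dyadically partition the interval into sub-intervals $I_j$ on which $|f''| \asymp 2^{-j}FN^{-2}$. Monotonicity combined with $|f'''| \asymp FN^{-3}$ forces $|I_j| \asymp 2^{-j}N$. The second derivative test applied on each $I_j$ then gives a bound $\ll 2^{-3j/2}F^{1/2} + 2^{j/2}NF^{-1/2}$, and summing these dyadic contributions (replacing $\lambda^{-1/2}$ by the trivial bound $|I_j|$ once it exceeds $|I_j|$) yields $|S| \ll F^{1/2} + NF^{-1/3}$. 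Since $F \le N^{3/2}$ forces $F^{1/2} \le F^{1/6}N^{1/2}$, this completes \eqref{eq:lem3.ii}.

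The main technical difficulty I foresee lies in the dyadic summation of part~(ii): the contribution $\sum_j 2^{j/2}NF^{-1/2}$ diverges geometrically, so I must truncate it at the index $j \asymp \tfrac13 \log_2 F$ beyond which the trivial bound $|I_j|$ is smaller; it is exactly this truncation that produces the critical term $NF^{-1/3}$, and the bookkeeping needs to be done carefully across both ``small~$\lambda$'' and ``large~$\lambda$'' dyadic scales.
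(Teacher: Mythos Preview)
Your proposal is correct and essentially reconstructs the standard proofs that the paper merely cites: the paper's own argument consists only of references to Theorem~2.6 of \cite{GK91} for \eqref{eq:lem3.i} and to Lemma~1 of \cite{Ku09} for \eqref{eq:lem3.ii} in the range $F\le N^{3/2}$ (deducing it from \eqref{eq:lem3.i} when $F>N^{3/2}$). Your $A$-process plus second-derivative argument for part~(i) is exactly how Theorem~2.6 of \cite{GK91} is proved, and your dyadic partition of $[N,N_1]$ according to the size of $|f''|$, truncated where $\lambda_j^{-1/2}$ exceeds $|I_j|$, is precisely the mechanism behind Lemma~1 of \cite{Ku09}.

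One small remark on your write-up: you phrase the second case of part~(ii) as ``the remaining regime $N\le F\le N^{3/2}$'', but the hypothesis $F\ge N$ belongs only to part~(i); the statement of \eqref{eq:lem3.ii} does not assume it, and the paper's case split is simply $F\le N^{3/2}$ versus $F>N^{3/2}$. This is harmless, since your dyadic argument nowhere uses $F\ge N$ and hence covers the full range $F\le N^{3/2}$ without change.
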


\begin{proof}
Bounds like \eqref{eq:lem3.i} are well-known: the above version follows from Theorem 2.6 in \cite{GK91}. When $F \le N^{3/2}$, inequality~\eqref{eq:lem3.ii} follows from Lemma 1 in Kumchev \cite{Ku09}; otherwise, it follows from \eqref{eq:lem3.i}. 
\end{proof}

In the remainder of this section, we apply the above general bounds to exponential sums with phase functions derived from $f(x) = \theta x^c + h(x + u)^\gamma$, where $u \in \{0,1\}$ and $\theta, h$ are real parameters. 

\begin{lemma}\label{lem5}
Let $1/2 < \gamma < 1 < c$, $|h| \le X^{4/3-\gamma}$, and $X^{\gamma-c} \le |\theta| \le X^{\delta}$ for a sufficiently small $\delta > 0$. Then, for $u \in \{0,1\}$ and $Y \sim X$, one has
\[ \sum_{Y < n \le X} e(\theta n^c + h(n+u)^{\gamma}) \ll X^{1-\nu}, \]
where $\nu = (c^2+3c+2)^{-1}$.
\end{lemma}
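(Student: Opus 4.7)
The plan is to apply the derivative tests of Lemmas \ref{lem3.r} and \ref{lem3.3} to the phase $f(x) = \theta x^c + h(x+u)^\gamma$, choosing the order $r$ of derivative adaptively to the size of
\[ F := |\theta| X^c + |h| X^\gamma. \]
The hypotheses give $X^\gamma \le F \ll X^{\max(c+\delta,\, 4/3)}$. Direct differentiation yields
\[ f^{(r)}(x) = \theta (c)_r x^{c-r} + h (\gamma)_r (x+u)^{\gamma-r}, \]
a sum of two monotone pure-power functions; hence $f^{(r)}$ has at most one zero on $(0,\infty)$. If such a zero lies in $(Y, X]$, I split the interval at it and absorb a short neighborhood trivially; on each remaining piece $|f^{(r)}(x)| \asymp FX^{-r}$. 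The analogous argument applies to $f''$ and $f'''$ as required for Lemma \ref{lem3.3}.

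For the main estimate, I pick $r$ based on the size of $F$. When $F \le X^{3-6\nu}$, I apply Lemma \ref{lem3.3}\eqref{eq:lem3.ii} (with $r=3$), obtaining $F^{1/6}X^{1/2} + X F^{-1/3}$, which is $\ll X^{1-\nu}$ whenever $X^{3\nu} \ll F \ll X^{3-6\nu}$; the lower bound follows from $\gamma > 1/2 \ge 3\nu$, since $\nu \le 1/6$ for $c > 1$. When $F > X^{3-6\nu}$, I apply Lemma \ref{lem3.r} with $r$ increasing in unit steps up to $r = \lfloor c \rfloor + 2$, the largest integer with $\nu_r := 1/(r(r-1)) \ge \nu$. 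This inequality controls the second term in \eqref{eq:lem3.m}, while the first and third terms are $\ll X^{-\nu}$ precisely when $F \in [X^{r^2(r-1)\nu/2},\, X^{r - \nu/\nu_r}]$. A short computation shows that these consecutive ranges overlap and, together with the range handled by $r=3$, cover all admissible $F$; for the chosen $r$, Lemma \ref{lem3.r} then gives $\ll X^{1-\nu_r + \eps} \ll X^{1-\nu}$ (noting $\nu_r > \nu$ strictly whenever $c \notin \mathbb N$).

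The main technical obstacle is the cancellation analysis in $f^{(r)}$: when $\theta(c)_r$ and $h(\gamma)_r$ have opposite signs and the two summands are of comparable magnitude, $|f^{(r)}|$ can be much smaller than either summand, breaking the hypothesis $|f^{(r)}| \asymp FX^{-r}$ of the derivative tests. This is resolved by the monotonicity observation from the first paragraph, which confines any such cancellation to a neighborhood of the unique zero of $f^{(r)}$, so that the interval-splitting absorbs it.
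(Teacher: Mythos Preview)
Your large-$F$ analysis and the overall plan of choosing $r$ adaptively match the paper's Case~1 (where $|\theta|X^c \gg X^{3/2}$, so the $\theta$-term dominates all derivatives and no cancellation occurs). The genuine gap is in your small-$F$ case, in the sentence ``absorb a short neighborhood trivially; on each remaining piece $|f^{(r)}(x)| \asymp FX^{-r}$.'' These two requirements are incompatible. Near the zero $x_0$ of $f'''$, one has $|f'''(x)| \asymp FX^{-4}\,|x-x_0|$ (since, as you note, $f^{(4)}(x_0)\neq 0$; in fact $|f^{(4)}|\asymp FX^{-4}$ throughout the cancellation regime). Thus $|f'''(x)|\asymp FX^{-3}$ holds only when $|x-x_0|\asymp X$. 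If the excised neighborhood has length $\eta X$ with $\eta$ a small fixed constant, then indeed $|f'''|\asymp_\eta FX^{-3}$ on the remainder, but the trivial contribution $\eta X$ is \emph{not} $\ll X^{1-\nu}$. If instead you make the neighborhood of length $\ll X^{1-\nu}$, then on the remainder you only get $|f'''|\gg X^{-\nu}FX^{-3}$, which is not $\asymp FX^{-3}$, so the hypothesis of Lemma~\ref{lem3.3}\eqref{eq:lem3.ii} fails. A dyadic refinement does not save this either: summing the $XF^{-1/3}$-type error over dyadic shells where $|f'''|\asymp 2^{-j}FX^{-3}$ yields at best $XF^{-1/6}$, which falls short when $c$ is close to $1$.

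The paper avoids this by \emph{not} bounding the bad subinterval trivially, but by switching derivative order there. The key observation is that the ratios $(c)_2/(c)_3$ and $(\gamma)_2/(\gamma)_3$ differ (since $c\neq\gamma$), so wherever the two summands of $f'''$ nearly cancel, the two summands of $f''$ do \emph{not}: one gets $|f''(x)|\asymp FX^{-2}$ on that subinterval. Hence $(Y,X]$ splits into at most three pieces on each of which either $|f''|\asymp FX^{-2}$ or $|f'''|\asymp FX^{-3}$; one then combines the $r=2$ case of Lemma~\ref{lem3.r} with Lemma~\ref{lem3.3}\eqref{eq:lem3.ii}, picking up an extra $F^{1/2}$ term, and obtains the bound $F^{1/2}+F^{1/6}X^{1/2}+XF^{-1/3}\ll X^{5/6}\le X^{1-\nu}$. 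Your last paragraph correctly identifies the obstacle but the proposed resolution (``interval-splitting absorbs it'') is exactly where the argument breaks; replace trivial absorption by the $r=2$ bound and the proof goes through.
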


\begin{proof}
Let $X^{\alpha} = |\theta|X^c$ and $F = X^{\alpha} + |h|X^{\gamma}$, and write $f(x) = \theta x^c + h(x + u)^{\gamma}$. We consider two cases depending on the size of $\alpha$. 

\smallskip

{\sl Case 1:} $\alpha \ge 3/2$. Then we have 
\[ \left| f^{(r)}(x) \right| \asymp X^{\alpha-r}, \]
and hence, Lemma \ref{lem3.r} with $r = \lceil \alpha \rceil + 1$ yields
\[ \sum_{Y < n \le X} e(f(n)) \ll X^{1 - \nu_r + \eps} \ll X^{1 - \nu}, \]
where $\nu_r = (r^2 - r)^{-1}$.

{\sl Case 2:} $\gamma \le \alpha \le 3/2$. Note that in this case we have $X^{1/2} \le F \le X^{3/2}$. We can split the interval $(Y, X]$ into at most three subintervals such that on each of them 
\[ \left| f^{(r)}(x) \right| \asymp FX^{-r}, \]
holds for $r=2$ or $3$. Moreover, we always have $|f''(x)| \ll FX^{-2}$. Thus, combining \eqref{eq:lem3.ii} and the case $r=2$ of \eqref{eq:lem3.m}, we get 
\[ \sum_{Y < n \le X} e(f(n)) \ll F^{1/2}  + F^{1/6}X^{1/2} + XF^{-1/3} \ll X^{5/6}. \]
\end{proof}

\begin{corollary}\label{lem:T}
Let $1 - \nu < \gamma < 1 < c$, with $\nu = (c^2+3c+2)^{-1}$, and $X^{\gamma-c} \le |\theta| \le X^{\delta}$ for a sufficiently small $\delta > 0$. Then one has
\[ T(\theta; X) \ll X^{1-\nu+\eps}. \]
\end{corollary}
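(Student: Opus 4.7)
\smallskip

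The plan is to split $T(\theta; X)$ into its smoothed pieces via identity~\eqref{2.1b}, so that
\[ T(\theta;X) = T_0(\theta;X) + T_1(\theta;X) + O(1), \]
and then bound each of $T_0$ and $T_1$ separately by reducing them to the exponential sums treated in Lemma~\ref{lem5}.

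\smallskip

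For $T_0(\theta;X) = \gamma \sum_{n \sim X} n^{\gamma-1}e(\theta n^c)$, I would apply partial summation to strip off the smooth factor $n^{\gamma-1}$ (which is of size $\asymp X^{\gamma-1}$). The remaining unweighted sum $\sum_{X/2 < n \le t} e(\theta n^c)$, for any $t \sim X$, can be obtained as a difference of two sums of the form handled by Lemma~\ref{lem5} (taking $h=0$, so that the hypothesis $|h| \le X^{4/3 - \gamma}$ holds trivially). This delivers
\[ T_0(\theta;X) \ll X^{\gamma - 1} \cdot X^{1-\nu} \ll X^{1-\nu}, \]
which is comfortably within the desired bound.

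\smallskip

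For $T_1(\theta;X) = \sum_{n \sim X} \Psi_\gamma(n) e(\theta n^c)$, I would invoke Lemma~\ref{l3} with $a_n = e(\theta n^c)$ and $A = 1$. The key is to pick the parameters $\sigma$ and $H$ consistently: since $1 - \gamma < \nu$, I can choose $\sigma$ slightly larger than $\gamma - 1 + \nu$, so that the error term $X^{\gamma - \sigma}$ is absorbed into $X^{1 - \nu + \eps}$, and then take $H = X^{1 - \gamma + \sigma}$. Because $\gamma$ is close to $1$ and $\sigma$ is small, both of the constraints $X^{1-\gamma+\sigma} \le X^{\gamma - 2\sigma}$ (needed for Lemma~\ref{l3}) and $H \le X^{4/3 - \gamma}$ (needed to apply Lemma~\ref{lem5} to the inner sum) are satisfied. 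Lemma~\ref{lem5} then bounds each inner sum by $X^{1-\nu}$, and summing against the weight $\Phi(h) \le |h|^{-1}$ over $1 \le |h| \le H$ produces only a harmless $\log X$ factor, giving $T_1(\theta;X) \ll X^{1-\nu+\eps}$.

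\smallskip

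The main technical hurdle is simply the juggling of the auxiliary parameters: one has to verify that a single choice of $\sigma$ simultaneously meets the admissibility conditions for Lemma~\ref{l3}, keeps $H$ within the range where Lemma~\ref{lem5} applies, and makes the residual $X^{\gamma-\sigma}$ term smaller than $X^{1-\nu+\eps}$. All three constraints are compatible precisely because the hypothesis $1 - \gamma < \nu = (c^2 + 3c + 2)^{-1}$ gives exactly the right amount of room.
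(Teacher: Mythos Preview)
Your proposal is correct and follows essentially the same approach as the paper: split $T(\theta;X)$ via \eqref{2.1b}, bound $T_0$ by partial summation together with the case $h=0$ of Lemma~\ref{lem5}, and bound $T_1$ by applying Lemma~\ref{l3} and then Lemma~\ref{lem5} to the resulting inner sums. The paper makes the same parameter choice you describe, taking $\sigma = \nu + \gamma - 1$ exactly (rather than ``slightly larger'') and $H = X^{\nu}$, which is precisely your $H = X^{1-\gamma+\sigma}$.
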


\begin{proof}
By Lemma \ref{lem5} with $h = 0$ and partial summation, we have
\begin{equation}\label{eq:9} 
T_0(\theta; X) \ll X^{\gamma - \nu}. 
\end{equation}
Using \eqref{2.1b}, \eqref{eq:9} and Lemma \ref{l3} with $\sigma = \nu + \gamma - 1$ and $H = X^{\nu}$, we reduce the corollary to the bound
\[ 
\sup_{\substack{Y \sim X\\ u \in \{0,1\}}} \sum_{1 \le |h| \le H} \Phi(h) \left| \sum_{Y < n \le X} e(\theta n^c + h(n+u)^\gamma ) \right| \ll X^{1 - \nu + \eps}, \]
which is an immediate consequence of Lemma \ref{lem5}. 
\end{proof}

Next, we establish similar estimates for $S_1(\theta; X)$ and $S_0(\theta; X)$. As usual, we derive our estimates from bounds on  double sums of the form
\[ \sum_{m \sim M} \sum_{Y < mk \le X} a_m b_k e(\theta (mk)^c + h(mk + u)^\gamma), \]
where $u \in \{0,1\}$ and $Y \sim X$. 

\begin{lemma}\label{lem:T1a}
Let $c > 5$, $3/4 < \gamma < 1$, $|h| \le X^{4/3-\gamma}$, and $X^{\gamma-c-\delta} \le |\theta| \le X^{\delta}$ for a sufficiently small $\delta > 0$. Also, let $(a_m)$ be a sequence of complex numbers such that $|a_m| \ll 1$, and suppose that 
\begin{equation}\label{eq:M1}
M \ll X^{1/2 + \rho},    
\end{equation}
where $\rho = (8c^2 + 12c + 12)^{-1}$. Then, for $u \in \{0,1\}$ and $Y \sim X$, one has
\begin{align*}
\sum_{m \sim M} \sum_{Y < mk \le X} a_m e(\theta (mk)^c + h(mk + u)^\gamma) \ll X^{1 - \rho + \eps}. 
\end{align*}
\end{lemma}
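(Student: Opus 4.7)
The plan is to fix $m \sim M$ and estimate the inner $k$-sum $T_m = \sum_{k} e(\theta m^c k^c + h(mk+u)^\gamma)$, in which $k$ ranges over an interval of length $K = X/M$ with $k \asymp K \ge X^{1/2 - \rho}$, and then to sum over $m$ trivially using $|a_m| \ll 1$. Writing $g(k)$ for the phase and differentiating term by term one finds, for every $r \ge 2$, that
\[
g^{(r)}(k) = \theta\, (c)_r\, m^c k^{c-r} + h\, (\gamma)_r\, m^r (mk+u)^{\gamma - r},
\]
so $|g^{(r)}(k)| \asymp F k^{-r}$ with $F := |\theta| X^c + |h| X^\gamma$, apart from possible cancellation between the two summands. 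Such cancellation can be handled either by a parity choice for $r$ (using $(c)_r > 0$ for $c > r-1$ and $(\gamma)_r$ of sign $(-1)^{r-1}$) or by splitting on which of $|\theta|X^c$ and $|h|X^\gamma$ dominates, the latter being clear-cut in the large-$F$ regime below since $|h|X^\gamma \le X^{4/3}$.

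I would split according to the size of $F$. In the regime $F \le X^{3/2 - 9\rho}$, Lemma~\ref{lem3.3} with $r = 3$ (or $r=4$ if a sign mismatch in $\theta h$ forces the even choice) yields $T_m \ll F^{1/6} K^{1/2} + K F^{-1/3}$; summation over $m$ then gives
\[
\sum_m a_m T_m \ll M^{1/2} X^{1/2} F^{1/6} + X F^{-1/3},
\]
which is $\ll X^{1-\rho + \eps}$ given $M \le X^{1/2 + \rho}$, $F \le X^{3/2-9\rho}$, and $F \ge X^{\gamma - \delta}$ (the last from $|\theta| \ge X^{\gamma-c-\delta}$). In the complementary regime $F > X^{3/2-9\rho}$, one has $|\theta|X^c \asymp F$, so $|g^{(r)}(k)| \asymp F k^{-r}$ holds for every $r \ge 2$ with no sign issue. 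Writing $F = X^\alpha$ with $\alpha \in [3/2 - 9\rho, c + \delta]$, I would apply Lemma~\ref{lem3.r} with an integer $r = r(\alpha)$ satisfying
\[
\alpha \le r/2 - \rho r^2, \qquad \rho \le 1/(2(r^2 - r + 1)), \qquad \alpha \ge \rho r^2(r-1)/2,
\]
so that after summation over $m$ the three error terms from Lemma~\ref{lem3.r} each contribute $\ll X^{-\rho + \eps}$.

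The main obstacle is to verify that an integer $r$ meeting all three constraints can be chosen uniformly in $\alpha \in [3/2-9\rho, c+\delta]$. Concretely, one selects $r$ from $\{3, 4, \ldots, 2\lceil c \rceil + 2\}$ depending on where $\alpha$ sits; consecutive integers $r$ cover overlapping $\alpha$-subintervals $[\rho r^2(r-1)/2,\, r/2 - \rho r^2]$, with the overlap condition $\rho(r^2 + 4r + 1) \le 1$ readily verified from $\rho = (8c^2+12c+12)^{-1}$. The precise form of $\rho$ is dictated by the requirement that the top $r$-subinterval (around $r = 2\lceil c \rceil + 2$) reach up to $\alpha = c + \delta$; once this numerical matching is done, the remaining steps (subdividing the $k$-interval so that $|g^{(r)}|$ is asymptotic rather than merely bounded, matching the parity of $r$ to $\mathrm{sign}(\theta h)$ in the borderline zone where $|\theta|X^c \asymp |h|X^\gamma$) are routine.
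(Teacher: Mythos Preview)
Your proposal is correct and takes essentially the same approach as the paper: fix $m$, apply $r$-th derivative estimates (Lemmas~\ref{lem3.r} and~\ref{lem3.3}) to the inner $k$-sum, split according to the size of $|\theta|X^c$, and match $r$ to this size so that after trivial summation over $m$ the bound $X^{1-\rho+\eps}$ falls out. The paper's organization differs only in details---it splits at $|\theta|X^c=X^{4/3+\delta}$ rather than $F=X^{3/2-9\rho}$, it handles the sign-cancellation issue in the small regime by observing that failure of $|f_m^{(r)}|\asymp FK^{-r}$ on a subinterval forces $|f_m^{(r+1)}|\asymp FK^{-r-1}$ there (instead of your parity trick), and in the large regime it selects $r$ via $K^{r-2}\le X^{\alpha}<K^{r-1}$ and bounds $r(r-1)<4c^2+6c+4.5$ directly, which is equivalent to your interval-matching constraints.
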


\begin{proof}
Let $X^{\alpha} = |\theta|X^c$, $F = X^{\alpha} + |h|X^{\gamma}$, $K = X/M$, and write $f_m(x) = \theta (mx)^c + h(mx + u)^{\gamma}$. We consider two cases depending on the size of $\alpha$. 

\smallskip

{\sl Case 1:} $\alpha \ge 4/3 + \delta$. Then we have
\[ \left| f_m^{(r)}(x) \right| \asymp X^{\alpha}K^{-r} \]
for all $r \in \mathbb N$ and all $x$ with $Y < mx \le X$. We choose $r \in \mathbb N$ so that $K^{r-2} \le X^{\alpha} < K^{r-1}$. Since $r \ge 3$, we can apply Lemmas \ref{lem3.r} or \ref{lem3.3} to get 
\begin{equation}\label{eq:lem4.1}
\sum_{Y < mk \le X} e( f_m(k) ) \ll K^{1-\nu+\eps},
\end{equation}
where $\nu = (r^2-r)^{-1}$. From \eqref{eq:M1} and the definitions of $\alpha, \rho$ and $r$, we deduce that
\begin{align*} r(r-1) &\le \left( \frac {2\alpha}{1-2\rho} + 2 \right)\left( \frac {2\alpha}{1-2\rho} + 1 \right) \\
&< \left( \frac {8c^3}{4c^2-1} + 2 \right)\left( \frac {8c^3}{4c^2-1} + 1 \right) < 4c^2 + 6c + 4.5, 
\end{align*}
provided that $c > 5$ and $\delta$ is sufficiently small. Hence, $K^{-\nu} \ll X^{-\rho}$ and the desired bound follows from \eqref{eq:lem4.1}.

{\sl Case 2:} $\gamma - \delta\le \alpha \le 4/3 + \delta$. In this case, we have $F < K^3$, and so we can choose $r \in \{2,3,4\}$ with $K^{r-2} \le F < K^{r-1}$. We remark that if the inequality
\begin{equation}\label{eq:sth_deriv} 
\left| f_m^{(s)}(x) \right| \asymp FK^{-s}
\end{equation}
with $s=r$ fails for any $x$ with $Y < mx \le X$, then those exceptional $x$ belong to a subinterval of $(Ym^{-1}, Xm^{-1}]$ where \eqref{eq:sth_deriv} holds with $s=r+1$. Therefore, when $K^{r-2} \le F \le K^{r-1}$, we can combine the cases $r$ and $r+1$ of \eqref{eq:lem3.m} to show that
\[ \sum_{Y < mk \le X} e( f_m(k) ) \ll K^{1+\eps}\big( K^{-\nu} + F^{-2\nu/(r+1)} \big), \]
where $\nu = \nu_{r+1} = (r^2 + r)^{-1}$.
When $r = 3$ or $4$, this leads to the bound \eqref{eq:lem4.1} with $\nu = 1/24$ and $\nu = 1/25$, respectively. When $r=2$, we recall that $F \ge X^{\gamma-\delta}$, and hence,
\[ \sum_{Y < mk \le X} e( f_m(k) ) \ll K^{1+\eps}F^{-1/9} \ll K^{1-\gamma/9+\delta} \ll K^{11/12}. \]
We conclude that in the present case, \eqref{eq:lem4.1} holds with $\nu = 1/25$, which more than suffices to deduce the claim of the lemma.
\end{proof}

\begin{lemma}\label{lem:T2a}
Let $c > 5$, $1 - 2\rho < \gamma < 1$, with $\rho = (8c^2 + 12c + 12)^{-1}$, $|h| \le X^{4/3-\gamma}$, and $X^{\gamma-c-\delta} \le |\theta| \le X^{\delta}$ for a sufficiently small $\delta > 0$. Also, let $(a_m)$ and $(b_k)$ be sequences of complex numbers such that $|a_m| \ll 1$ and $|b_k| \ll 1$, and suppose that 
\begin{equation}\label{eq:M2}
X^{2\rho} \le M \ll X^{1/3}.    
\end{equation}
Then, for $u \in \{0,1\}$ and $Y \sim X$, one has
\begin{align*}
\sum_{m \sim M} \sum_{Y < mk \le X} a_mb_k e(\theta (mk)^c + h(mk + u)^\gamma) \ll X^{1 - \rho + \eps}. 
\end{align*}
\end{lemma}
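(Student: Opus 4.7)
The plan is to apply Cauchy--Schwarz in the $k$-variable to absorb the unknown weight $b_k$, then estimate the resulting difference sum via van der Corput. Writing $\Sigma$ for the double sum in question, $f(n) = \theta n^c + h(n+u)^\gamma$, and $K = X/M$, Cauchy--Schwarz gives
\[ |\Sigma|^2 \le K \sum_{m_1, m_2 \sim M} a_{m_1}\overline{a_{m_2}} \sum_{k} e\big( f(m_1 k) - f(m_2 k) \big), \]
with the $k$-sum restricted by the conditions $Y < m_j k \le X$ (so that effectively $k \sim K$). The diagonal contribution $m_1 = m_2$ is at most $KM \cdot K = X^2/M$, which yields $|\Sigma| \ll X/M^{1/2} \ll X^{1-\rho}$ precisely when $M \ge X^{2\rho}$---the origin of the lower endpoint in the hypothesis on $M$.

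For the off-diagonal, substitute $m_1 = m+l$, $m_2 = m$ with $1 \le |l| \le M$, and set $g_{l,m}(k) = f((m+l)k) - f(mk)$. A short computation of derivatives with respect to $k$ yields, after possibly subdividing the $k$-range to remove cancellation between the two terms in $f$,
\[ \big| g_{l,m}^{(r)}(k) \big| \asymp F \cdot \frac{|l|}{M} \cdot K^{-r}, \qquad F = |\theta|X^c + |h|X^\gamma, \]
uniformly for $k \sim K$ and each integer $r \ge 2$. Thus the inner $k$-sum is of van der Corput type with effective parameter $G = F|l|/M$ at length $K$. Applying Lemma \ref{lem3.r} or Lemma \ref{lem3.3} to this inner sum and then summing over $l$ and $m$, one arrives at an off-diagonal bound of the shape
\[ X^{2+\eps} \Big[ (F K^{-r})^{\nu} + K^{-\nu} + F^{-2\nu/r} \Big], \qquad \nu = \frac{1}{r(r-1)}, \]
with the integer $r$ chosen according to the size of $F$.

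The main obstacle is the case analysis that makes each of the three bracketed terms $\ll X^{-2\rho+\eps}$. Mirroring the split in the proof of Lemma \ref{lem:T1a}, set $X^\alpha = |\theta|X^c$. In the regime $\alpha \ge 4/3 + \delta$, one picks $r$ with $K^{r-2} \le X^\alpha < K^{r-1}$ and invokes Lemma \ref{lem3.r}; in the complementary regime $\gamma - \delta \le \alpha \le 4/3 + \delta$, one combines Lemma \ref{lem3.3} with the low-order cases ($r \in \{2,3,4\}$) of Lemma \ref{lem3.r}. The hypothesis $M \ll X^{1/3}$ gives $K \ge X^{2/3}$, which is what makes $K^{-\nu} \ll X^{-2\rho}$ for the admissible $r$ and keeps $F \le K^3$ in the second regime, while $\gamma > 1-2\rho$ ensures that $F \ge X^{\gamma-\delta}$ is large enough to dominate the $F^{-2\nu/r}$ term. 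The specific value $\rho = (8c^2+12c+12)^{-1}$ is dictated by the inequalities on $r$ that these bounds impose, exactly as in the proof of Lemma \ref{lem:T1a}.
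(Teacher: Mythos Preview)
Your approach is essentially the paper's: Cauchy--Schwarz in $k$ to remove $b_k$, then van der Corput bounds on the differenced phase $g_{l,m}$, with the same two-regime split in $\alpha$. The only difference is that the paper, after Cauchy, applies the Weyl--van der Corput lemma with a truncation parameter $Q = X^{2\rho-\eps}$ (so the shift ranges over $|q|\le Q$ with diagonal contribution $X^2/Q$, and $r$ is selected in Case~1 via $K^{r-3}\le X^{\alpha-1+2\rho}<K^{r-2}$) rather than expanding the square in full as you do; both choices place $r(r-1)$ below $1/(3\rho)$ and hence both give $K^{-\nu}\ll X^{-2\rho}$.
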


\begin{proof}
Let $W$ denote the double sum in question, and write $f_m(x) = \theta (mx)^c + h(mx + u)^\gamma$, $X^{\alpha} = |\theta|X^{c}$, $F = X^{\alpha} + |h|X^{\gamma}$, and $K = X/M$. By Cauchy's inequality, 
\[ |W|^{2} \ll K \sum_{K/2 < k \le 2K} \bigg| \sum_{\substack{m \sim M\\ Y < mk \le X}} {a_m e( f_m(k) )} \bigg|^{2}. \]
Let $Q = X^{2\rho-\eps}$. The Weyl-van der Corput lemma (Lemma~2.5 in \cite{GK91}) gives
\begin{equation} \label{eq:lemT2a.1} 
|W|^{2} \ll \frac{X}{Q} \sum_{K/2 < k \le 2K}\sum_{|q|\leq Q}{\left ( 1 - \frac{|q|}{Q}\right )} \sum_{m \in \mathcal I} a_{m + q}\overline{a_m} \, e(g_{q,m}(k)), 
\end{equation}
where $g_{q,m}(x) = f_{m+q}(x) - f_m(x)$ and $\mathcal I$ is the subinterval of $(M/2,M]$ defined by the inequalities
\begin{equation} \label{eq:lemT2a.2}
Y < km \le X, \quad Y < k(m+q) \le X. 
\end{equation}
We estimate the contribution from terms with $q=0$ to the sum in \eqref{eq:lemT2a.1} trivially. We change the order of summation in the remainder of that sum to obtain
\begin{equation}\label{eq:16}
|W|^{2} \ll \frac{X^{2}}{Q} + \frac{X}{Q}\sum_{1 \le |q| \leq Q}
\sum_{m \sim M} \bigg| \sum_{k \in \mathcal I'} e(g_{q,m}(k)) \bigg|,
\end{equation}
where $\mathcal I'$ is the subinterval of $(K/2,2K]$ subject to conditions \eqref{eq:lemT2a.2}. Similarly to the proof of Lemma \ref{lem:T1a}, the estimation of the last sum breaks into two cases.

\smallskip

{\sl Case 1:} $\alpha \ge 4/3 + \delta$. In this case, we have
\[ \left| g_{q,m}^{(r)}(x) \right| \asymp |q|X^{\alpha-1}K^{1-r} \]
for all $r \in \mathbb N$ and all $x \in \mathcal I'$. We choose $r$ so that $K^{r-3} \le X^{\alpha-1+2\rho} < K^{r-2}$ and apply Lemmas \ref{lem3.r} and \ref{lem3.3} to obtain
\begin{equation}\label{eq:lemT2a.4}
\sum_{k \in \mathcal I'} e( g_{q,m}(k) ) \ll K^{1-\nu+\eps} \big( 1 + (|q|X^{-2\rho})^{-\beta} \big),
\end{equation}
where $\nu = (r^2-r)^{-1}$ and $\beta = \beta_r \in (0, 1/3)$. We insert this bound into the right side of \eqref{eq:16} and sum the result over $m$ and $q$ to get
\begin{equation}\label{eq:lemT2a.5}
|W|^{2} \ll X^{2-2\rho+\eps} + X^{2+\eps}K^{-\nu}.
\end{equation}
From \eqref{eq:M2} and the definitions of $\alpha, \rho$ and $r$, we find that
\[ r(r-1) \le \left( \frac {3\alpha}2 + \frac 32 + 3\rho \right)\left( \frac {3\alpha}2 + \frac 12 + 3\rho \right) < \frac {9c^2}4 + 3c + 2, \]
provided that $c > 5$ and $\delta$ is sufficiently small. Hence, $K^{-\nu} \ll X^{-2\rho}$ and the claim of the lemma follows from \eqref{eq:lemT2a.5}.

{\sl Case 2:} $\gamma - \delta \le \alpha \le 4/3+\delta$. In this case, we have $1 \le FX^{-1+2\rho} \le K$. Similarly to Case~2 of the proof of Lemma \ref{lem:T1a}, we can split $\mathcal I'$ into at most three subintervals so that on each of them we have
\[ \left| g_{q,m}^{(r)}(x) \right| \asymp |q|FX^{-1}K^{1-r} \]
with $r=3$ or $r=4$. Thus, the cases $r = 3$ and $r = 4$ of \eqref{eq:lem3.m} give
\[ \sum_{k \in \mathcal I'} e( g_{q,m}(k) ) \ll K^{23/24+\eps}\big( 1 + (|q|X^{-2\rho})^{-1/9} \big). \]
Combining this bound, \eqref{eq:M2} and \eqref{eq:16}, we deduce that 
\[ |W|^{2} \ll X^{2-2\rho+\eps} + X^{2+\eps}K^{-1/24} \ll X^{2-2\rho+\eps}. \qedhere \]
\end{proof}

\begin{lemma}\label{lem:S1a}
Let $c > 5$, $1 - \rho < \gamma < 1$, with $\rho = (8c^2 + 12c + 12)^{-1}$, and $X^{\gamma-c-\delta} \le |\theta| \le X^{\delta}$ for a sufficiently small $\delta > 0$. Then, one has
\[ S(\theta; X) \ll X^{1 - \rho + \eps}. \]
\end{lemma}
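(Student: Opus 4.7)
By \eqref{2.1}, it suffices to show that $S_0(\theta;X)$ and $S_1(\theta;X)$ are both $O(X^{1-\rho+\eps})$, which I would establish by a combinatorial decomposition together with Lemmas~\ref{lem:T1a} and~\ref{lem:T2a}.

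For $S_0(\theta;X)$, I would first use partial summation to strip the smooth weight $\gamma p^{\gamma-1}$, reducing to the estimation of $\sum_{p\in(Y_1,Y_2]}(\log p)e(\theta p^c)$ for $X/2\le Y_1<Y_2\le X$, and then apply Heath-Brown's identity with $K=3$ and $z=X^{1/3}$. Following dyadic decomposition, each resulting bilinear form is assigned to the Type~I regime (where Lemma~\ref{lem:T1a} with $h=0$ gives the required bound provided $M\le X^{1/2+\rho}$) or to the Type~II regime (where Lemma~\ref{lem:T2a} with $h=0$ gives the bound provided $X^{2\rho}\le M\le X^{1/3}$).

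For $S_1(\theta;X)$, I would replace the sum over primes by a sum over prime powers at negligible cost and then invoke Lemma~\ref{l3} with $a_n=\Lambda(n)e(\theta n^c)$, $\sigma=\rho$, and $H=X^{4/3-\gamma}$. The hypothesis $\gamma>1-\rho$ (with $\rho$ small) guarantees both $\sigma<(2\gamma-1)/3$ and $X^{1-\gamma+\sigma}\le H\le X^{\gamma-2\sigma}$, and the chosen $H$ is exactly the maximum $|h|$ allowed by Lemmas~\ref{lem:T1a} and~\ref{lem:T2a}. Since $\sum_{|h|\le H}\Phi(h)\ll\log X$ and the error term of Lemma~\ref{l3} contributes $O(X^{\gamma-\rho+\eps})\ll X^{1-\rho+\eps}$, the task reduces to the uniform estimate
\[ \sum_{Y<n\le X}\Lambda(n)\,e\bigl(\theta n^c+h(n+u)^\gamma\bigr) \ll X^{1-\rho+\eps} \qquad (1\le|h|\le H,\ u\in\{0,1\}), \]
which again follows from Heath-Brown's identity applied to $\Lambda(n)$ together with Lemmas~\ref{lem:T1a} and~\ref{lem:T2a}.

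The main difficulty lies in the combinatorial step. With $K=3$, Heath-Brown's identity produces six variables $n_1,n_2,n_3,m_1,m_2,m_3$ of dyadic sizes $N_j,M_i$ (where $M_i\le X^{1/3}$ and $\prod N_jM_i\asymp X$), and each block must be assigned to one of the two available regimes. The plan is: if $\max_jN_j\ge X^{1/2-\rho}$, place that $n_j$ alone on the inner side---its coefficient ($\log n_1$ or $1$) is smooth and is absorbed by partial summation---to obtain a Type~I form with $M=X/N_j\le X^{1/2+\rho}$. Otherwise, all $N_j<X^{1/2-\rho}$, and a short case analysis shows that either some single variable already has size in $[X^{2\rho},X^{1/3}]$ (in which case placing it on the outer side yields a Type~II form in the range of Lemma~\ref{lem:T2a}), or exactly two of $n_1,n_2,n_3$ lie in $(X^{1/3},X^{1/2-\rho})$ while every other variable is below $X^{2\rho}$, in which case placing those two $n$'s on the inner side produces $M=X/(N_iN_j)\in[X^{2\rho},X^{1/3}]$, again in the range of Lemma~\ref{lem:T2a}.
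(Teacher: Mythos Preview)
Your plan coincides with the paper's argument: reduce via \eqref{2.1}, apply Lemma~\ref{l3} to $S_1$, observe that the resulting bound \eqref{eq:19} with $h=0$ also handles $S_0$ after partial summation, and then prove \eqref{eq:19} by a Heath-Brown decomposition feeding into Lemmas~\ref{lem:T1a} and~\ref{lem:T2a}. Your parameters in Lemma~\ref{l3} ($\sigma=\rho$, $H=X^{4/3-\gamma}$) differ from the paper's ($\sigma=\rho+\gamma-1$, $H=X^{\rho}$), but both choices satisfy the hypotheses and lead to the same reduction. For the combinatorial step the paper quotes Lemma~3 of Heath-Brown~\cite{HB83}, a ready-made flexible decomposition that, with the choices $u=X^{2\rho}$, $v=4X^{1/3}$, $z\asymp X^{1/2-\rho}$, outputs Type~I and Type~II sums lying directly in the ranges \eqref{eq:M1} and \eqref{eq:M2}; this spares one the manual case analysis you perform with the standard $K=3$ identity.

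Your case analysis is essentially right, but the clause ``exactly two of $n_1,n_2,n_3$ lie in $(X^{1/3},X^{1/2-\rho})$'' is not quite complete: in the $j=3$ term of the identity it can happen that all three $N_j$ sit just above $X^{1/3}$ while each $M_i$ is $O(1)$. This residual configuration is harmless---the product constraint then forces each $N_j\ll X^{1/3}$, so taking any one of them (together with the $M_i$'s) as the outer variable still gives $M$ in the Type~II range $[X^{2\rho},CX^{1/3}]$ of Lemma~\ref{lem:T2a}---but you should account for it. With that minor patch the two proofs are equivalent.
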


\begin{proof}
By \eqref{2.1}, it suffices to show that
\[ S_j(\theta; X) \ll X^{1-\rho+\eps} \qquad (j=0,1). \]
Lemma \ref{l3} with $\sigma = \rho + \gamma - 1$ and $H = X^{\rho}$ (and an obvious choice of the coefficients $(a_n)$) reduces the estimate for $S_1(\theta;X)$ to the bound
\[ \sum_{1 \le |h| \le H} \Phi(h) \left| \sum_{Y < p \le X} (\log p) e(\theta p^c + h(p + u)^\gamma) \right| \ll X^{1 - \rho + \eps}, \]
where $u \in \{0,1\}$ and $Y \sim X$. Thus, it suffices to show that
\begin{equation}\label{eq:19}
\sum_{Y < n \le X} \Lambda(n) e(\theta n^c + h(n + u)^\gamma) \ll X^{1 - \rho + \eps}
\end{equation}
for all $h$ with $1 \le |h| \le H$ (here, $\Lambda(n)$ is the von Mangoldt function). Since the desired estimate for $S_0(\theta; X)$ follows from \eqref{eq:19} with $h=0$, it remains to establish \eqref{eq:19} for all $h$ with $|h| \le H$.

Let $u, v, z$, with $z - \frac 12 \in \mathbb N$, be parameters to be chosen momentarily subject to the constraints
\begin{equation}\label{eq:S1a.2} 
3 < u < v < z, \quad z \ge 4u^2, \quad 64uz^2 \le x < 32x \le v^3. 
\end{equation}
A combinatorial lemma due to Heath-Brown (see Lemma 3 in \cite{HB83}) decomposes the sum in \eqref{eq:19} into a linear combination of $O(\log^8 X)$ double sums 
\[ \sum_{m \sim M} \sum_{Y < mk \le X} a_m b_k e(\theta (mk)^c + h(mk + u)^\gamma) \]
of two types:
\begin{itemize}
\item Type I: where $a_m \ll m^{\eps}$, $b_k = 1$ or $\log k$, and $M \le Xz^{-1}$;
\item Type II: where $a_m \ll m^{\eps}$, $b_k \ll k^{\eps}$, and $u \le M \le v$.
\end{itemize}
If we choose $u = X^{2\rho}$, $v = 4X^{1/3}$ and $z = \lfloor \frac 1{10} X^{1/2-\rho} \rfloor - \frac 12$, conditions \eqref{eq:S1a.2} are satisfied and we can appeal to Lemmas \ref{lem:T1a} and \ref{lem:T2a} to estimate all Type I and Type II sums and to complete the proof of \eqref{eq:19}.
\end{proof}

\begin{lemma}\label{lem:S1b}
Let $11/12 < \gamma < 1 < c$ and $|\theta| \le X^{\gamma-c-\delta}$ for a sufficiently small $\delta > 0$. Then one has
\[ S_1(\theta; X) \ll X^{11/12+\delta}. \]
\end{lemma}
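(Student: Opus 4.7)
The plan is a two-step reduction: first apply Lemma~\ref{l3} to express $S_1(\theta;X)$ in terms of an exponential sum over primes with a linearly perturbed phase, then bound that sum by a classical Piatetski--Shapiro decomposition, treating the factor $e(\theta n^c)$ as a negligible perturbation of the dominant phase $h(n+u)^\gamma$.

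For the first step I apply Lemma~\ref{l3} to the sequence $a_n = \Lambda(n)e(\theta n^c)$ (with $|a_n| \ll \log X$), choosing $\sigma = \gamma - 11/12 - \delta/2$ (admissible since $\gamma > 11/12$ and $\delta$ is small) and $H = X^{1-\gamma+\sigma} = X^{1/12-\delta/2}$. The error term $AX^{\gamma-\sigma}$ is then $\ll X^{11/12+\delta}$, which is acceptable, and the lemma reduces to showing
\[ \sum_{1 \le |h| \le H} \Phi(h) \bigg| \sum_{Y < n \le X} \Lambda(n) e(\theta n^c + h(n+u)^\gamma) \bigg| \ll X^{11/12+\delta}, \]
uniformly in $u \in \{0,1\}$ and $Y \sim X$. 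Since $\sum_{1 \le |h| \le H}\Phi(h) \ll \log X$, it suffices to prove that each inner sum is $\ll X^{11/12+\delta/2}$ for all $h$ with $1 \le |h| \le H$.

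For the second step I apply Heath-Brown's combinatorial identity (as in the proof of Lemma~\ref{lem:S1a}) to decompose the inner sum into $O(\log^8 X)$ Type~I and Type~II bilinear sums with phase $f(x) = \theta x^c + h(x+u)^\gamma$. The key observation is that the constraint $|\theta|\le X^{\gamma-c-\delta}$ gives $|\theta x^{c-r}| \le X^{\gamma-r-\delta}$, which is dominated by $|h|x^{\gamma-r}$ whenever $|h| \ge 1$; hence $|f^{(r)}(x)| \asymp |h|x^{\gamma-r}$ at every order $r$, and the analysis coincides with the classical Piatetski--Shapiro treatment of $\sum_n \Lambda(n) e(hn^\gamma)$.

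The main technical step, and the source of the $\gamma > 11/12$ threshold, is the estimation of the Type~II sums: I would handle these by Cauchy's inequality and the Weyl--van der Corput lemma (to replace the bilinear sum by a sum of simpler differences), followed by the second-derivative test (Lemma~\ref{lem3.r} with $r=2$) applied to the phase of the difference. The Type~I sums are easier and follow directly from the second-derivative test, provided the Heath-Brown parameters are chosen so that $M \le X^{1/2}$. Since these calculations are routine adaptations of the classical Piatetski--Shapiro arguments found in Chapter~4 of~\cite{GK91}, I would present them briefly and cite the standard references rather than reproduce the details in full.
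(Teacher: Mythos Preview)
Your proposal is correct and follows essentially the same route as the paper: apply Lemma~\ref{l3} to reduce $S_1(\theta;X)$ to a prime sum with phase $\theta n^c + h(n+u)^\gamma$, observe that the hypothesis $|\theta|\le X^{\gamma-c-\delta}$ forces the derivatives of this phase to match those of $hn^\gamma$, and then invoke the classical Piatetski--Shapiro estimates from Chapter~4 of~\cite{GK91}. The only cosmetic differences are your use of Heath-Brown's identity where the paper points to the Vaughan-based argument on pp.~50--53 of~\cite{GK91}, and your (slightly cleaner) choice of $\sigma$ and $H$.
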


\begin{proof}
Similarly to the proof of Lemma \ref{lem:S1a}, it suffices to show that
\begin{equation}\label{eq:S1b} 
\sum_{1 \le |h| \le H} \Phi(h) \left| \sum_{Y < p \le X} (\log p) e(\theta p^c + h(p + u)^\gamma) \right| \ll X^{11/12+\delta/2},  
\end{equation}
where $u \in \{0,1\}$, $Y \sim X$ and $H = X^{1-\gamma+\delta}$. The case $\theta = u = 0$ of \eqref{eq:S1b} is essentially a special case of the main part of the proof of Theorem~4.14 in \cite{GK91}: see pp. 50--53 in \cite{GK91} in the case of the exponent pair $(\frac 12, \frac 12)$. The more general bound required here can be established using an identical argument, since under the hypotheses on $u$ and $\theta$, we have
\[ \left| \frac {d^j}{dx^j}\big( \theta x^c + h(x + u)^\gamma \big) \right| \asymp \left| \frac {d^j}{dx^j}\big(  hx^\gamma  \big) \right| \qquad (j=1,2) \]
whenever $1 \le |h| \le H$ and $x \sim X$.
\end{proof}

\begin{lemma}\label{lem:S0}
Let $6\rho < \gamma < 1 < c < 3/2 - 6\rho$ and $X^{\gamma-c-\delta} \le |\theta| \le X^{\delta}$ for a sufficiently small $\delta > 0$ and a fixed $\rho \in (0, 1/12)$. Then one has
\[ S_0(\theta; X) \ll X^{\gamma - \rho + \delta}. \]
\end{lemma}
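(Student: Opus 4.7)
My plan is to follow the strategy of Lemma \ref{lem:S1a}, adapted to the small-$c$ regime. By partial summation using the smooth weight $\gamma t^{\gamma-1}\log t$ (of size $\asymp X^{\gamma-1}\log X$ on $(X/2,X]$), it suffices to establish the bound
\[
\sum_{Y<n\le X}\Lambda(n)e(\theta n^c)\ll X^{1-\rho+\delta/2}
\]
uniformly in $Y\sim X$; the factor $X^{\gamma-1+\eps}$ from partial summation then produces the claimed $S_0(\theta;X)\ll X^{\gamma-\rho+\delta}$. Writing $X^{\alpha}=|\theta|X^c$, the hypotheses force $\alpha\in(6\rho-\delta,3/2-6\rho+\delta)$.

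Next I would decompose the $\Lambda$-sum by Heath-Brown's identity (Lemma~3 in \cite{HB83}) as in the proof of Lemma~\ref{lem:S1a}, with parameters $u=X^{2\rho}$, $v=4X^{1/3}$, $z=\lfloor X^{1/2-\rho}/10\rfloor-\tfrac12$, obtaining $O(\log^8 X)$ double sums of Type~I ($M\le Xz^{-1}$, $b_k\in\{1,\log k\}$) and Type~II ($u\le M\le v$, $|a_m|,|b_k|\ll X^{\eps}$). For the Type~I sums, the inner $k$-sum has phase $f(k)=\theta(mk)^c$ satisfying $|f''(k)|\asymp FK^{-2}$ on scale $K=X/m$ with $F=X^{\alpha}$, so Lemma~\ref{lem3.3}(ii) yields
\[
\sum_{Y<mk\le X}e(\theta(mk)^c)\ll X^{\alpha/6}(X/m)^{1/2}+(X/m)X^{-\alpha/3}.
\]
Summing over $m\sim M$ and using $\alpha\ge 3\rho$ (ensured by $\gamma>6\rho$) gives the required bound for Type~I with $M$ up to roughly $X^{1/2-\delta/3}$.

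For the Type~II sums $W=\sum_{m}\sum_{k}a_m b_k e(\theta(mk)^c)$, I would apply Cauchy--Schwarz on the $k$-sum and expand the square; the off-diagonal inner sum $\sum_k e(\theta k^c(m^c-m'^c))$ admits Lemma~\ref{lem3.3}(ii) with $F'=|m-m'|X^{\alpha}/M$ because $|m^c-m'^c|\asymp|m-m'|M^{c-1}$. Summing over $m,m'$ via $\sum_{d\le M}d^{1/6}\asymp M^{7/6}$ and $\sum_{d\le M}d^{-1/3}\asymp M^{2/3}$ produces
\[
|W|^2\ll X^{\eps}\bigl(X^2/M+M^{1/2}X^{3/2+\alpha/6}+X^{2-\alpha/3}\bigr)\ll X^{2-2\rho+\eps},
\]
valid throughout the Type~II range $[X^{2\rho},4X^{1/3}]$; the three hypotheses $c<3/2-6\rho$, $\gamma>6\rho$, and $\rho<1/12$ are each invoked to control one of the three competing terms.

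The main obstacle is the Type~I sub-range where $M$ approaches the Heath-Brown upper bound $Xz^{-1}\asymp X^{1/2+\rho}$ at the same time as $\alpha$ approaches $3/2-6\rho$; here the first-derivative term $F^{1/6}K^{1/2}$ in Lemma~\ref{lem3.3}(ii) falls short of $X^{1-\rho}/M$ by roughly a factor of $X^{\rho/2}$. The remedy I would employ is to insert a single Weyl--van der Corput differencing step (an $AB$-process) before applying Lemma~\ref{lem3.3}(ii): this reduces the inner phase to the monomial $-cq\theta m^c k^{c-1}$ of degree $c-1\in(0,1/2)$, whose Van der Corput estimate, after summing over the shift $q\le Q$ for an appropriately chosen $Q$, supplies exactly the missing savings. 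The strict inequality $c<3/2-6\rho$ together with $\rho<1/12$ is what keeps the resulting constraints on $Q$ compatible.
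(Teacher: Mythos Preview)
The paper's proof is far shorter than your proposal: it simply quotes Tolev's Lemma~10 from \cite{To92}, which already gives
\[
\sum_{Y < p \le X} (\log p)\,e(\theta p^c) \ll |\theta|^{1/2}X^{(2c+1)/4+\eps} + |\theta|^{1/6}X^{(2c+9)/12+\eps} + X^{1-\gamma/6+\delta},
\]
and then applies partial summation with the weight $\gamma t^{\gamma-1}$. Your proposal is, in effect, an attempt to reprove Tolev's lemma from scratch using the tools of the present paper, which is unnecessary here.

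More seriously, your reconstruction contains a real gap that you do not close. You correctly locate the obstacle: with the Heath-Brown parameters borrowed from Lemma~\ref{lem:S1a}, the Type~I range reaches $M \asymp X^{1/2+\rho}$, while your Type~I estimate via Lemma~\ref{lem3.3}(ii) is only adequate for $M \le X^{1/2}$ when $\alpha$ is near $3/2-6\rho$. But the proposed remedy is not carried out, and in fact it does not cover the stated range $\rho \in (0,1/12)$. After one $A$-step on the inner $k$-sum (length $K \asymp X^{1/2-\rho}$) and Lemma~\ref{lem3.3}(ii) on the differenced phase, one obtains
\[
\Big|\sum_k e(f_m(k))\Big|^2 \ll K^2Q^{-1} + Q^{1/6}X^{\alpha/6}K^{4/3} + Q^{-1/3}X^{-\alpha/3}K^{7/3},
\]
and the requirement that each term be $\ll K^2X^{-2\rho}$ forces $X^{2\rho}\le Q\le X^{1/2-10\rho}$, which is empty once $\rho>1/24$. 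Switching to Vaughan's identity does not help either: it merely relocates the deficit to the Type~II sums near $M\approx X^{1/2}$, where your single-Cauchy bound $|W|^2\ll X^{3/2+\alpha/6}M^{1/2}$ is again too large by the same factor $X^{\rho/2}$. Tolev's own treatment of the bilinear sums near $M=X^{1/2}$ involves a balancing of estimates that your sketch does not reproduce; rather than reinventing it, the intended argument is simply to cite his result.
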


\begin{proof}
Suppose that $Y \sim X$. The calculations in Lemma 10 of \cite{To92} establish the inequality
\[ \sum_{Y < p \le X} (\log p) e(\theta p^c) \ll |\theta|^{1/2}X^{(2c + 1)/4+\eps} + |\theta|^{1/6}X^{(2c+9)/12 + \eps} + X^{1-\gamma/6+\delta}. \]
Under the hypotheses of the lemma, the stated bound for $S_0(\theta; X)$ follows by partial summation.
\end{proof}

\begin{lemma}\label{lem:T1b}
Let $3\rho < \gamma < 1 < c$, $1 \le |h| \le X^\rho$, and $X^{\gamma-c-\delta} \le |\theta| \le X^{\delta}$, with $0 < \rho < c/6$ and a sufficiently small $\delta > 0$. Also, let $(a_m)$ be a sequence of complex numbers such that $|a_m| \ll 1$, and suppose that 
\begin{equation}\label{eq:M1a}
M \ll \min \big( X^{1-(c+\delta)/2 - \rho}, X^{1-(\gamma+3\rho)/2} \big).   
\end{equation}
Then, for $u \in \{0,1\}$ and $Y \sim X$, one has
\begin{align*}
\sum_{m \sim M} \sum_{Y < mk \le X} a_m e(\theta (mk)^c + h(mk + u)^\gamma) \ll X^{1 - \rho}.
\end{align*}
\end{lemma}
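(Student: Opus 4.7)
The plan is to proceed in the spirit of Lemma \ref{lem:T1a}, but using only the second (and, for one exceptional subinterval, the third) derivative test, which is consistent with the stricter range of $M$ in \eqref{eq:M1a}. Writing $f_m(x) = \theta(mx)^c + h(mx+u)^\gamma$, $X^\alpha = |\theta|X^c$, $F = X^\alpha + |h|X^\gamma$, and $K = X/M$, I would use $|a_m| \ll 1$ to reduce the desired inequality to showing
\[ \sum_{m\sim M} \bigg|\sum_{k \in I_m} e(f_m(k))\bigg| \ll X^{1-\rho}, \]
where $I_m \subset (K/2, 2K]$ is the interval in $k$ determined by $Y < mk \le X$.

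To bound the inner sum, I would partition $I_m$ into $O(1)$ subintervals on each of which $|f_m''(x)|$ is well controlled. Since
\[ f_m''(x) = c(c-1)\theta m^c x^{c-2} + \gamma(\gamma-1) h m^2 (mx+u)^{\gamma-2}, \]
the ratio of the two summands, viewed as a function of $y = mx$, is a power of $y$ with exponent $c-\gamma>0$ and hence monotone; so the two summands are comparable only on a single subinterval. Wherever one of them dominates, $|f_m''(x)| \asymp FK^{-2}$ and the $r=2$ case of Lemma \ref{lem3.r} yields
\[ \sum_{k} e(f_m(k)) \ll F^{1/2} + KF^{-1/2}, \]
up to an $X^{\eps}$. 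On the residual subinterval with opposite-sign cancellation (if any), I would instead invoke Lemma \ref{lem3.3} applied to $f_m^{(3)}$: because $c>1>\gamma$, the zero of $f_m^{(3)}$ is distinct from that of $f_m''$, so $|f_m^{(3)}(x)| \asymp FK^{-3}$ there, delivering a bound no worse than the previous one.

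Summing over $m$ then produces $MF^{1/2}+XF^{-1/2}$, and I would verify each term is $\ll X^{1-\rho}$ by splitting into two cases. When $X^\alpha \ge |h|X^\gamma$, so $F \asymp X^\alpha \le X^{c+\delta}$, the first part of \eqref{eq:M1a} gives $MF^{1/2} \ll X^{1-\rho}$. When $|h|X^\gamma \ge X^\alpha$, so $F \asymp |h|X^\gamma \le X^{\gamma+\rho}$, the second part of \eqref{eq:M1a} yields $MF^{1/2} \ll X^{1-(\gamma+3\rho)/2+(\gamma+\rho)/2} = X^{1-\rho}$. The secondary term $XF^{-1/2}$ is harmless because $|h| \ge 1$ forces $F \ge X^\gamma$, and $\gamma > 3\rho$ then gives $XF^{-1/2} \ll X^{1-3\rho/2}$. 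Any $X^{\eps}$ from Lemma \ref{lem3.r} is absorbed by choosing $\delta$ (and the implicit $\eps$) small enough.

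The main obstacle I anticipate is the bookkeeping around the possible cancellation in $f_m''$: the two derivative tests must be combined cleanly, and the borderlines of the $M$-constraints in \eqref{eq:M1a} must be matched to the borderlines of the derivative-test bounds. The monotonicity of the ratio of the two summands in $y=mx$ ensures the decomposition has only $O(1)$ pieces, and the fact that $|f_m^{(3)}|$ does not vanish at the same point as $|f_m''|$ follows from the sign pattern implied by $c>1>\gamma$.
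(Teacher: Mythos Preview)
Your plan is essentially the paper's, but there is a real gap where you dispose of the third-derivative contribution. On the cancellation subinterval Lemma~\ref{lem3.3} (inequality \eqref{eq:lem3.ii}) gives
\[ \sum_{k} e(f_m(k)) \ll F^{1/6}K^{1/2} + KF^{-1/3}, \]
and neither term is majorised by $F^{1/2} + KF^{-1/2}$: writing $a=F^{1/2}$, $b=KF^{-1/2}$, one has $F^{1/6}K^{1/2}=a^{5/6}b^{1/2}$ with $\tfrac56+\tfrac12>1$, so this cannot be $O(a+b)$; and $KF^{-1/3}\ge KF^{-1/2}$ for $F\ge 1$. (Concretely, if $F\asymp K$ --- which occurs, say, for $|h|=1$, $|\theta|=X^{\gamma-c}$ and $M=X^{1-\gamma}$ --- both third-derivative terms read $K^{2/3}$ against $K^{1/2}$.) Hence your summed bound $MF^{1/2}+XF^{-1/2}$ omits two terms, and tellingly you never use the hypothesis $\rho<c/6$, which is exactly what one of the missing terms requires.

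The paper repairs this as follows. The third-derivative estimate is only invoked when the two summands of $f_m''$ are comparable, i.e.\ when $|\theta|X^c\asymp |h|X^\gamma$; in that regime $F^{1/6}\asymp(|\theta|X^c)^{1/6}$, so after summing over $m$ the middle term becomes $(|\theta|X^c)^{1/6}(XM)^{1/2}$. Combining $|\theta|\le X^\delta$ with the \emph{first} bound in \eqref{eq:M1a} gives this $\ll X^{1-(c+\delta)/12-\rho/2}$, which is $\ll X^{1-\rho}$ precisely when $\rho\le (c+\delta)/6$. The remaining term sums to $XF^{-1/3}$ (not $XF^{-1/2}$), and the hypothesis $\gamma>3\rho$ is calibrated to it: $F\ge X^\gamma$ yields $XF^{-1/3}\ll X^{1-\gamma/3}\ll X^{1-\rho}$. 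With these two corrections your outline becomes the paper's proof.
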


\begin{proof}
Let $K = X/M$ and $F = |\theta|X^c + |h|X^{\gamma}$ and write $f_m(x) = \theta(mx)^c + h(mx + u)^\gamma$. As in the second cases of the proofs of Lemmas \ref{lem5}, \ref{lem:T1a} and \ref{lem:T2a}, we note that when $Y < mx \le X$, we have $|f_m''(x)| \ll FK^{-2}$ and at least one of the bounds
\[ \left| f_m^{(r)}(x) \right| \asymp FK^{-r} \qquad (r = 2,3). \]
Hence, we can combine \eqref{eq:lem3.ii} and the case $r=2$ of \eqref{eq:lem3.m} to obtain
\begin{equation}\label{eq:23}
\sum_{Y < mk \le X} e( f_m(k) ) \ll F^{1/2} + F^{1/6}K^{1/2} + F^{-1/3}K.
\end{equation}
Moreover, since we only need to refer to \eqref{eq:lem3.ii} when $|\theta|X^c \asymp |h|X^{\gamma}$, the middle term in \eqref{eq:23} can be replaced by $|\theta|^{1/6}X^{(c+3)/6}$. Thus,
\begin{equation}\label{eq:24} 
\sum_{m \sim M} \sum_{Y < mk \le X} e( f_m(k) ) \ll F^{1/2}M + (|\theta|X^c)^{1/6}(XM)^{1/2} + X^{1-\gamma/3}. 
\end{equation}
The lemma follows from this bound and the hypotheses on $\rho, \theta, h, M$.
\end{proof}

\begin{lemma}\label{lem:T2b}
Let $\frac 12 + 4\rho < \gamma < 1 < c$, $1 \le |h| \le X^\rho$, and $X^{\gamma-c-\delta} \le |\theta| \le X^{\delta}$, with a sufficiently small $\delta > 0$ and $\rho > 0$ that satisfies the conditions
\begin{equation}\label{eq:25}  
c + 14\rho < 2, \quad 2\gamma + 14\rho < 3, \quad 2c + 12\rho < 3. 
\end{equation}
Also, let $(a_m)$ and $(b_k)$ be sequences of complex numbers such that $|a_{m}| \ll 1$ and $|b_{k}| \ll 1$, and suppose that 
\begin{equation}\label{eq:M.2} 
X^{2\rho} \le M \le X^{1-2\rho}. 
\end{equation}
Then, for $u \in \{0,1\}$ and $Y \sim X$, one has
\begin{equation}
\mathop{\sum_{m \sim M} \sum_{k \sim K}}_{Y < mk \le X} a_mb_k e(\theta (mk)^c + h(mk + u)^\gamma) \ll X^{1 - \rho+\delta}.
\end{equation}
\end{lemma}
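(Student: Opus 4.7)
The plan is to mirror the structure of Lemma \ref{lem:T2a}: apply Cauchy--Schwarz to eliminate one coefficient sequence, then the Weyl--van der Corput lemma in the remaining variable, and finally the second- and third-derivative tests on the resulting inner exponential sum. Since the phase $\theta(mk)^c + h(mk+u)^\gamma$ is symmetric in $m$ and $k$, by relabeling the variables and the coefficient sequences $(a_m), (b_k)$ we may assume without loss of generality that $M \le X^{1/2}$. Write $W$ for the double sum, set $f_m(x) = \theta(mx)^c + h(mx+u)^\gamma$ and $K = X/M$, and apply Cauchy--Schwarz in $k$ followed by the Weyl--van der Corput lemma (Lemma 2.5 in \cite{GK91}) in $m$ with shift parameter $Q = X^{2\rho-\eps}$, exactly as in the proof of Lemma \ref{lem:T2a}. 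This yields
\[ |W|^2 \ll \frac{X^2}{Q} + \frac{X}{Q}\sum_{1 \le |q| < Q}\sum_{m \sim M}\bigg|\sum_{k \in \mathcal I'} e(g_{q,m}(k))\bigg|, \]
where $g_{q,m}(x) = f_{m+q}(x) - f_m(x)$ and $\mathcal I'$ is a subinterval of $(K/2, 2K]$.

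The next step is to estimate the derivatives of $g_{q,m}$. Linearizing (legitimately, since $|q|/m \le Q/M \ll 1$), one finds $|g_{q,m}^{(r)}(x)| \asymp |q| F (M K^r)^{-1}$ for all $r \ge 2$ and all $x \in \mathcal I'$, where $F = |\theta|X^c + |h|X^\gamma$. With the shorthand $F_{\ast} = |q|F/M$, this reads $|g_{q,m}^{(r)}(x)| \asymp F_{\ast} K^{-r}$; in particular, the hypothesis $|g_{q,m}''(x)| \ll F_{\ast} K^{-2}$ required by the second form of Lemma \ref{lem3.3} is met uniformly on $\mathcal I'$. Combining that bound with the $r=2$ case of Lemma \ref{lem3.r} gives
\[ \sum_{k \in \mathcal I'} e(g_{q,m}(k)) \ll \min\bigl(F_{\ast}^{1/2},\, F_{\ast}^{1/6}K^{1/2}\bigr) + K^{1/2} + K F_{\ast}^{-1/3}. \]

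Finally, I would insert this into the expression for $|W|^2$ and carry out the sums over $q$ and $m$. The four resulting contributions line up with the four hypothesis conditions, once one distinguishes the regime where $F$ is dominated by $|\theta|X^c$ (Case A) from the regime where it is dominated by $|h|X^\gamma$ (Case B). The $F_\ast^{1/6}K^{1/2}$ contribution evaluates to $X^{3/2}Q^{1/6}M^{1/3}F^{1/6}$ and is $\ll X^{2-2\rho+\eps}$ in Case A provided $c + 14\rho < 2$, the corresponding requirement in Case B being automatic from $\gamma < 1$. The $F_\ast^{1/2}$ contribution evaluates to $X Q^{1/2}M^{1/2}F^{1/2}$ and is acceptable in Case A under $2c + 12\rho < 3$ and in Case B under $2\gamma + 14\rho < 3$. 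The $K F_{\ast}^{-1/3}$ contribution evaluates to $X^2 Q^{-1/3}M^{1/3}F^{-1/3}$ and, using $F \ge X^\gamma$, requires $\gamma > 1/2 + 4\rho$. The residual $K^{1/2}$ and $X^2/Q$ terms are dominated throughout.

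The main technical obstacle is the case analysis: one must simultaneously track which of $|\theta|X^c$ and $|h|X^\gamma$ dominates in $F$ and whether $F_\ast$ lies above or below $K^{3/2}$ (which determines whether the $r=2$ or $r=3$ bound is sharper). The symmetry reduction $M \le X^{1/2}$ is essential for keeping the bookkeeping tractable, and it ensures that each of the four hypothesis conditions is invoked in precisely one of the resulting subcases, with no slack.
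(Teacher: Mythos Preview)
Your outline matches the paper's proof almost exactly: symmetry reduction to $M\ll X^{1/2}$, Cauchy--Schwarz plus Weyl--van der Corput with $Q=X^{2\rho-\eps}$, then second/third-derivative bounds on the inner sum. The final bookkeeping is also the same. There is, however, one genuine gap in the middle step.

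Your assertion that $|g_{q,m}^{(r)}(x)|\asymp F_\ast K^{-r}$ holds for \emph{all} $r\ge 2$ and \emph{all} $x\in\mathcal I'$ is not correct: the $\theta$-part and the $h$-part of $g_{q,m}^{(r)}$ can cancel when $|\theta|X^c$ and $|h|X^\gamma$ are comparable and of opposite sign. On the subinterval where the second derivative is small you are forced to use \eqref{eq:lem3.ii}; elsewhere you use the $r=2$ case of Lemma~\ref{lem3.r}. Consequently the inner-sum bound is a \emph{sum} of the two estimates, not a $\min$, and the term $F_\ast^{1/6}K^{1/2}$ is not optional. Once that term is present, your claim that ``Case~B is automatic from $\gamma<1$'' is false: summing $F_\ast^{1/6}K^{1/2}$ in Case~B yields the requirement $\gamma+15\rho<2$, which does not follow from \eqref{eq:25} (e.g.\ take $c=1.01$, $\gamma=0.95$, $\rho=0.0705$).

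The paper repairs this exactly as in the proof of Lemma~\ref{lem:T1b}: the third-derivative contribution arises only on the exceptional subinterval where the second derivative can vanish, and that forces $|\theta|X^c\asymp |h|X^\gamma$. Hence in that term one may replace $F$ by $|\theta|X^c$, obtaining
\[
\sum_{k\in\mathcal I'} e(g_{q,m}(k)) \ll G_q^{1/2} + (\Delta_q|\theta|X^c)^{1/6}K^{1/2} + G_q^{-1/3}K,
\]
with $\Delta_q=|q|/M$ and $G_q=\Delta_qF$. After summation this gives $X^{5/3}(|\theta|X^cQ)^{1/6}$ for the middle term, which is $\ll X^{2-2\rho}$ precisely under $c+14\rho<2$; no Case~B analysis of that term is needed. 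With this one correction your argument is complete and identical to the paper's.
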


\begin{proof}
Let $W$ denote the double sum in question and $F = |\theta|X^c + |h|X^{\gamma}$. By symmetry, we may assume that $M \le K$; hence, $M \ll X^{1/2}$. Similarly to \eqref{eq:16}, we have 
\begin{equation}\label{eq:27}
|W|^{2} \ll \frac{X^{2}}{Q} + \frac{X}{Q}\sum_{1 \le |q| \leq Q}
\sum_{m \sim M} \bigg| \sum_{mk \in \mathcal I} e(g_{q,m}(k)) \bigg|,
\end{equation}
where $Q = X^{2\rho-\eps}$, $\mathcal I$ is a subinterval of $(Y,X]$, and 
\[ g_{q,m}(x) = \theta((m+q)^c - m^c)x^c + h\big( ((m+q)x+u)^\gamma - (mx + u)^\gamma \big). \]
Similarly to \eqref{eq:23} and \eqref{eq:24}, we have
\[ \sum_{Y < mk \le X} e( g_{q,m}(k) ) \ll G_q^{1/2} + (\Delta_q|\theta|X^c)^{1/6}K^{1/2} + G_q^{-1/3}K, \]
where $\Delta_q = |q|M^{-1}$ and $G_q = \Delta_qF$. We insert this bound into the right side of \eqref{eq:27} to deduce that
\begin{align*}
W^2 \ll X^2Q^{-1} + X^{5/4}(FQ)^{1/2} + X^{5/3}(|\theta|X^cQ)^{1/6} + X^{(13-2\gamma)/6}Q^{-1/3}. 
\end{align*}
The lemma follows from the last inequality and the hypotheses on $\rho, \theta$ and $h$. 
\end{proof}

\begin{lemma}\label{lem:S1}
Let $1-\rho < \gamma < 1 < c$ and $X^{\gamma-c-\delta} \le |\theta| \le X^{\delta}$, with a sufficiently small $\delta > 0$ and $\rho > 0$ that satisfies conditions \eqref{eq:25}. Then, one has
\[ S_1(\theta; X) \ll X^{1 - \rho+\delta}. \]
\end{lemma}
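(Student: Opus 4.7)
The plan is to adapt the proof of Lemma \ref{lem:S1a}, substituting Lemmas \ref{lem:T1b} and \ref{lem:T2b} for Lemmas \ref{lem:T1a} and \ref{lem:T2a}. Choose coefficients $a_n$ by setting $a_n = (\log n)e(\theta n^c)$ when $n$ is prime and $a_n = 0$ otherwise, so that $\sum_{n \sim X} a_n \Psi_\gamma(n) = S_1(\theta; X)$, and apply Lemma \ref{l3} with $\sigma = \rho + \gamma - 1$ and $H = X^\rho$. The hypotheses are satisfied: $\sigma > 0$ since $\gamma > 1 - \rho$, $\sigma < (2\gamma-1)/3$ follows from $\rho < (2-\gamma)/3$ (a consequence of \eqref{eq:25} for $\gamma$ near $1$), and $X^{1-\gamma+\sigma} = X^\rho = H$. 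Using the bound $\sum_{1 \le |h| \le H}\Phi(h) \ll \log X$ and the standard passage from summation over primes weighted by $\log p$ to summation weighted by $\Lambda(n)$ (at the admissible cost of $O(X^{1/2+\eps})$ from prime powers), we reduce the proof to the inequality
\begin{equation}\label{eq:S1p.1}
\sum_{Y < n \le X} \Lambda(n) e(\theta n^c + h(n+u)^\gamma) \ll X^{1-\rho+\delta/2},
\end{equation}
to be shown uniformly for $Y \sim X$, $u \in \{0,1\}$, and $1 \le |h| \le X^\rho$.

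To establish \eqref{eq:S1p.1}, apply Heath-Brown's combinatorial identity (Lemma~3 of \cite{HB83}) to $\Lambda(n)$ with the parameter choices of Lemma \ref{lem:S1a}: $u = X^{2\rho}$, $v = 4X^{1/3}$, and $z = \lfloor \tfrac{1}{10} X^{1/2-\rho}\rfloor - \tfrac{1}{2}$. Conditions \eqref{eq:S1a.2} hold for $X$ large once $\rho < 1/10$, a constraint implied by \eqref{eq:25}. The identity expresses the left side of \eqref{eq:S1p.1} as a linear combination of $O(\log^8 X)$ double sums
\[ W(M) = \sum_{m \sim M} \sum_{Y < mk \le X} a_m b_k e(\theta(mk)^c + h(mk+u)^\gamma), \]
either of Type I (with $|a_m| \ll m^\eps$, $b_k \in \{1, \log k\}$, and $M \le Xz^{-1} \ll X^{1/2+\rho}$) or of Type II (with $|a_m| \ll m^\eps$, $|b_k| \ll k^\eps$, and $X^{2\rho} \le M \le 4X^{1/3}$).

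Each sub-sum $W(M)$ is bounded by $X^{1 - \rho + \delta/2}$ through a case analysis on $M$. For Type I sums with $M \le X^{2\rho}$, Lemma \ref{lem:T1b} applies: hypothesis \eqref{eq:M1a} amounts to $c + 6\rho + \delta \le 2$ and $\gamma + 7\rho \le 2$, both of which follow from \eqref{eq:25} and $\gamma < 1$. For Type I sums with $X^{2\rho} \le M \le Xz^{-1}$, we absorb the trivial factor $b_k$ (writing $|b_k| \ll X^\eps$) and invoke Lemma \ref{lem:T2b}; condition \eqref{eq:M.2} holds since $Xz^{-1} \ll X^{1/2+\rho} \le X^{1-2\rho}$ for $\rho \le 1/6$. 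For Type II sums, Lemma \ref{lem:T2b} applies directly, as $[X^{2\rho}, 4X^{1/3}] \subseteq [X^{2\rho}, X^{1-2\rho}]$. Summing the $O(\log^8 X)$ contributions and the $\log X$ factor from summation over $h$ yields the stated bound.

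The principal obstacle is that the constraint $c > 1$ forces the upper bound on $M$ in Lemma \ref{lem:T1b} to lie below $X^{1/2-\rho}$, which is strictly smaller than the Type I window $M \le Xz^{-1} \asymp X^{1/2+\rho}$ produced by Heath-Brown's identity. The fix is to reroute the intermediate-$M$ Type I sums through Lemma \ref{lem:T2b}, whose Cauchy--Schwarz plus van-der-Corput treatment accommodates these larger values of $M$; the inequalities \eqref{eq:25} are calibrated precisely so that Lemma \ref{lem:T2b} delivers the desired saving on both the genuine Type II regime and this enlarged Type I regime, while Lemma \ref{lem:T1b} handles the remaining small-$M$ Type I sums.
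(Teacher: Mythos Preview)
Your proof is correct and follows the same overall strategy as the paper: reduce via Lemma~\ref{l3} to bilinear sums, then cover the Type~I and Type~II ranges with Lemmas~\ref{lem:T1b} and~\ref{lem:T2b}, rerouting the large-$M$ Type~I sums through the Type~II estimate. The only difference is that you retain Heath-Brown's identity from Lemma~\ref{lem:S1a} with parameters $(u,v,z)\approx(X^{2\rho},4X^{1/3},X^{1/2-\rho})$, whereas the paper switches here to Vaughan's identity with $U=X^{1/3}$; both decompositions produce Type~I/II ranges lying inside $[X^{2\rho},X^{1-2\rho}]$, so the same case analysis via Lemmas~\ref{lem:T1b} and~\ref{lem:T2b} goes through.
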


\begin{proof}
Similarly to the proof of Lemma \ref{lem:S1a}, it suffices to establish 
\eqref{eq:19} for $u \in \{0,1\}$, $Y\sim X$ and all $h$ with $1 \le |h| \le X^{\rho}$. As in that proof, we decompose the sum in \eqref{eq:19} into double sums. 
We use Vaughan's identity (Lemma~4.12 in~\cite{GK91} with $u = v$) to reduce \eqref{eq:19} to the estimation of $O(\log X)$ sums of the forms
\begin{equation}\label{eq:S.3} \sum_{m \sim M} \sum_{Y < mk \le X} a_m b_k e(\theta (mk)^c + h(mk + u)^\gamma)  
\end{equation}
of Types I and II (here, $U \le X^{1/2}$ is a parameter to be chosen shortly):
\begin{itemize}
\item Type I: where $a_m \ll m^{\eps}$, $b_k = 1$ or $\log k$, and $M \le U^2$;
\item Type II: where $a_m \ll m^{\eps}$, $b_k \ll k^{\eps}$, and $U \le M \le XU^{-1}$.
\end{itemize}

For Type II sums, Lemma \ref{lem:T2b} gives the desired bound under hypotheses \eqref{eq:25} and \eqref{eq:M.2}. Since a Type I sum can be viewed as a special case of a Type II sum, we may estimate a Type I sum using either of Lemmas \ref{lem:T1b} or \ref{lem:T2b}. The ranges \eqref{eq:M1a} and \eqref{eq:M.2} overlap when 
\[ c + 6\rho < 2-\delta, \quad \gamma + 7\rho < 2. \]
Since these inequalities follow from \eqref{eq:25}, we can estimate a Type I sum when
\[ M \le X^{1-2\rho}. \]
Therefore, together Lemmas \ref{lem:T1b} and \ref{lem:T2b} allow us to estimate all the double sums arising from the application of Vaughan's identity, provided that we can choose $u$ with
\[ X^{2\rho} \le U \le X^{1/2 - \rho}. \] 
Since conditions \eqref{eq:25} imply that $\rho < 1/14$, we may choose $U = X^{1/3}$, for example. 
\end{proof}

\begin{corollary}\label{lem:S}
Let $\gamma < 1 < c$, with $15(c-1) + 28(1-\gamma) < 1$, and $X^{\gamma-c-\delta} \le |\theta| \le X^{\delta}$ for a sufficiently small $\delta > 0$. Then one has
\[ S(\theta; X) \ll X^{2\gamma - c - 2\delta}. \]
\end{corollary}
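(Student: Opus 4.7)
The plan is to apply the decomposition \eqref{2.1} to write $S(\theta;X) = S_0(\theta;X) + S_1(\theta;X) + O(1)$ and to bound the two pieces by Lemmas \ref{lem:S0} and \ref{lem:S1}. These two lemmas share a free parameter $\rho$, which I will choose so that both bounds simultaneously reach the target exponent $2\gamma - c - 2\delta$.

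Equating the exponent $1 - \rho + \delta$ supplied by Lemma \ref{lem:S1} with $2\gamma - c - 2\delta$ forces the choice
\[ \rho = (c-1) + 2(1-\gamma) + 3\delta. \]
With this $\rho$, the bound from Lemma \ref{lem:S0} becomes $X^{\gamma - \rho + \delta} = X^{3\gamma - c - 1 - 2\delta}$, which is smaller than the target by a factor of $X^{\gamma - 1} < 1$. Thus once this $\rho$ is admissible in both lemmas, the corollary follows at once.

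Verifying admissibility is the core of the argument, and the condition $15(c-1) + 28(1-\gamma) < 1$ is exactly what makes it work. First, this hypothesis yields $(c-1) + 2(1-\gamma) < 1/14$, and hence $\rho < 1/14 + 3\delta$, placing $\rho$ in $(0, 1/12)$ for $\delta$ sufficiently small. Substituting our $\rho$ into the three inequalities in \eqref{eq:25} transforms them into
\[ 15(c-1) + 28(1-\gamma) + 42\delta < 1, \]
\[ 14(c-1) + 26(1-\gamma) + 42\delta < 1, \]
\[ 14(c-1) + 24(1-\gamma) + 36\delta < 1, \]
the first of which is the main hypothesis with slack $42\delta$, while the other two are strictly weaker. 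The remaining constraints $6\rho < \gamma$, $c + 6\rho < 3/2$ from Lemma \ref{lem:S0}, and $1 - \rho < \gamma$ from Lemma \ref{lem:S1}, are all easily handled: the last follows from $\rho > 2(1-\gamma) > 1-\gamma$, and the first two reduce respectively to $6(c-1) + 13(1-\gamma) + 18\delta < 1$ and $7(c-1) + 12(1-\gamma) + 18\delta < 1/2$, which hold because the linear combinations on the left are dominated by $\max(6/15, 13/28) < 1$ and $\max(7/15, 12/28) = 7/15 < 1/2$ times $15(c-1) + 28(1-\gamma)$.

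The feature worth noting is that the coefficient pair $(15,28)$ in the hypothesis is not arbitrary: it is precisely the pair produced when the balancing value of $\rho$ is substituted into the tightest of the three conditions in \eqref{eq:25}. No genuinely new exponential-sum estimation is required; the entire proof is a matter of assembling the two previous lemmas with the optimal choice of $\rho$, and the only "obstacle" is the linear bookkeeping that confirms the hypothesis of the corollary exactly encodes the worst of the resulting constraints.
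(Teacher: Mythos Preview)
Your proof is correct and follows essentially the same route as the paper: decompose via \eqref{2.1} and apply Lemmas \ref{lem:S0} and \ref{lem:S1} with $\rho$ chosen to hit the target exponent, verifying that \eqref{eq:25} reduces to the stated hypothesis. The only cosmetic difference is that the paper uses two separate values of $\rho$ (namely $c-\gamma+3\delta$ for $S_0$ and $c+1-2\gamma+3\delta$ for $S_1$), whereas you use the latter value for both; since your larger $\rho$ still satisfies the hypotheses of Lemma \ref{lem:S0} and gives an even stronger bound on $S_0$, this changes nothing of substance.
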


\begin{proof}
By \eqref{2.1}, it suffices to show that
\[ S_j(\theta; X) \ll X^{2\gamma - c - 2\delta} \qquad (j=0,1).\]
Lemma \ref{lem:S0} with $\rho = c - \gamma + 3\delta$ yields the bound on $S_0(\theta; X)$, provided that 
\begin{equation}\label{eq:30}
14(c-1) + 12(1-\gamma) < 1. 
\end{equation} 
We estimate $S_1(\theta;X)$ using Lemma \ref{lem:S1} with $\rho = c+1 - 2\gamma + 3\delta$. With this choice, conditions \eqref{eq:25} can be expressed as 
\[ 15(c-1) + 28(1-\gamma) < 1, \quad 14(c-1)+26(1-\gamma) < 1, \quad 14(c-1) + 24(1-\gamma) < 1. \]
Clearly, the first of these inequalities implies the other two as well as \eqref{eq:30}.
\end{proof}

\begin{corollary}\label{lem:Sa}
Let $\gamma < 1 < c$, with $8(c-1) + 21(1-\gamma) < 1$, and $X^{\gamma-c-\delta} \le |\theta| \le X^{\delta}$ for a sufficiently small $\delta > 0$. Then one has
\[ S(\theta;X) \ll X^{(3\gamma - c)/2 - \delta}. \]
\end{corollary}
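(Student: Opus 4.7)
The plan is to follow the same strategy as in the proof of Corollary \ref{lem:S}, but with tighter choices of the parameter $\rho$ in both Lemma \ref{lem:S0} and Lemma \ref{lem:S1}, so as to reach the sharper exponent $(3\gamma - c)/2 - \delta$ in place of $2\gamma - c - 2\delta$. By the identity \eqref{2.1}, it suffices to establish the desired bound for each of $S_0(\theta;X)$ and $S_1(\theta;X)$ separately.

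For $S_0(\theta;X)$, I would apply Lemma \ref{lem:S0} with the choice $\rho = (c-\gamma)/2 + 2\delta$. A direct check shows that $\gamma - \rho + \delta = (3\gamma-c)/2 - \delta$, so the conclusion of the lemma matches the target exactly. The hypotheses $6\rho < \gamma$ and $c < 3/2 - 6\rho$ rearrange into $3(c-1) + 4(1-\gamma) < 1 - 12\delta$ and $8(c-1) + 6(1-\gamma) < 1 - 24\delta$ respectively, both of which are strictly weaker than the standing assumption $8(c-1) + 21(1-\gamma) < 1$ once $\delta$ is small enough. The remaining side condition $\rho < 1/12$ is automatic, since our hypothesis forces $c - \gamma < 1/8$.

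For $S_1(\theta;X)$, I would apply Lemma \ref{lem:S1} with $\rho = (c - 3\gamma + 2)/2 + 3\delta$. This choice gives $1 - \rho + \delta = (3\gamma - c)/2 - 2\delta$, again matching the target. The side condition $1 - \rho < \gamma$ reduces to $c - \gamma > -6\delta$, which is trivial. The three inequalities in \eqref{eq:25} become, after substitution,
\[
8(c-1) + 21(1-\gamma) < 1 - 42\delta, \quad 7(c-1) + 19(1-\gamma) < 1 - 42\delta, \quad 8(c-1) + 18(1-\gamma) < 1 - 36\delta,
\]
and for $\delta$ small the first of these is precisely the hypothesis of the corollary, while it clearly implies the other two.

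The substantive analytic work has been absorbed into Lemmas \ref{lem:S0} and \ref{lem:S1}, so this corollary amounts to a parameter optimization; I do not expect any real obstacle. The only conceptual point worth emphasizing is that the constant $21$ appearing in the stated hypothesis is exactly the constant that emerges from the Weyl-type constraint $c + 14\rho < 2$ in \eqref{eq:25} under the optimal choice of $\rho$ above. That constraint is therefore what is binding, while the $S_0$ estimate and the other two inequalities in \eqref{eq:25} leave some slack.
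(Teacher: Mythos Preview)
Your proposal is correct and follows exactly the paper's own approach: split via \eqref{2.1}, apply Lemma~\ref{lem:S0} with $\rho = (c-\gamma)/2 + 2\delta$ and Lemma~\ref{lem:S1} with $\rho = (2+c-3\gamma)/2 + O(\delta)$, and verify that the resulting constraints are all implied by $8(c-1)+21(1-\gamma)<1$. The only difference is cosmetic (you use $3\delta$ rather than $2\delta$ in the second choice), and your remark that the first inequality in \eqref{eq:25} is the binding one is accurate.
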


\begin{proof}
As in the proof of Corollary \ref{lem:S}, we use \eqref{2.1} and Lemmas \ref{lem:S0}  and \ref{lem:S1}, but we alter the choices of $\rho$: we appeal to Lemma \ref{lem:S0} with $\rho = (c - \gamma)/2 + 2\delta$ and to Lemma \ref{lem:S1} with $\rho = (2+c - 3\gamma)/2 + 2\delta$. With these choices, the application of Lemma \ref{lem:S0} requires that
\[ 8(c-1) + 6(1-\gamma) < 1, \]
and that of Lemma \ref{lem:S1} that 
\[ 8(c-1) + 21(1-\gamma) < 1, \quad 7(c-1)+19(1-\gamma) < 1, \quad 8(c-1) + 18(1-\gamma) < 1. \qedhere \]
\end{proof}

\begin{lemma}\label{lem10}
Let $I$ be an interval in $\mathbb R$. Then one has
\begin{align}
\int_I |S(\theta; X)|^2 \, d\theta & \ll |I|X^{\gamma}L^2 + X^{2\gamma - c}L^3, \label{eq:lem10.0}\\
\int_I |T(\theta; X)|^2 \, d\theta & \ll |I|X^{\gamma} + X^{2\gamma - c}L, \label{eq:lem10.0a}\\
\int_I |S_0(\theta; X)|^2 \, d\theta &\ll |I|X^{2\gamma-1}L + X^{2\gamma-c}L^2, \label{eq:lem10.1}\\
\int_I |V(\theta; X)|^2 \, d\theta &\ll X^{2\gamma-c}L, \label{eq:lem10.2}
\end{align}
where $L = \log X$.
\end{lemma}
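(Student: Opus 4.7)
The plan is to derive \eqref{eq:lem10.0}, \eqref{eq:lem10.0a} and \eqref{eq:lem10.1} by the familiar route of expanding each squared modulus as a double sum over $(n_1, n_2)$ and integrating over $I$ term-by-term, using the elementary bound $\big|\int_I e(\theta\alpha)\, d\theta\big| \le \min(|I|, \pi^{-1}|\alpha|^{-1})$. The last estimate \eqref{eq:lem10.2} is of a different nature and will follow from Plancherel's theorem after a change of variables. The main combinatorial ingredient for the first three is the crude count
\[ |\mathcal N_\gamma \cap [a, b]| = \lfloor (b+1)^\gamma \rfloor - \lceil a^\gamma \rceil + 1 \ll (b-a)X^{\gamma - 1} + 1 \qquad (X/2 \le a < b \le X), \]
which is immediate from the definition of $\mathcal N_\gamma$ and the mean value theorem.

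Focus on \eqref{eq:lem10.0}. The diagonal $p_1 = p_2$ contributes at most $|I| L^2 |\mathcal N_\gamma(X)| \ll |I| X^\gamma L^2$. For the off-diagonal, the inequality $|p_1^c - p_2^c| \asymp X^{c-1}|p_1 - p_2|$ (valid for $p_i \sim X$) gives
\[ \bigg| \int_I e\big(\theta(p_1^c - p_2^c)\big)\, d\theta \bigg| \ll \min\big(|I|,\, X^{1-c}|p_1 - p_2|^{-1}\big). \]
Setting $R_0 = X^{1-c}/|I|$, I split the off-diagonal pairs according to $|p_1 - p_2| \le R_0$ (using the $|I|$ bound) or $|p_1 - p_2| > R_0$ (using the $|\alpha|^{-1}$ bound together with dyadic decomposition). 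Combined with the combinatorial count above, both pieces are absorbed into $O(L^3 X^{2\gamma - c} + L^2 |I| X^\gamma)$, which together with the diagonal yields \eqref{eq:lem10.0}. The proofs of \eqref{eq:lem10.0a} and \eqref{eq:lem10.1} follow the same outline with the respective weights: for $T$ the $\log p$ factors disappear, dropping two $L$'s, while for $S_0$ an extra factor $\gamma^2(p_1 p_2)^{\gamma - 1} \asymp X^{2\gamma-2}$ is inserted into each term. In the $S_0$ case it is useful to count pairs of primes via the prime number theorem estimate $|\{p \text{ prime}\} \cap [a, b]| \ll (b-a)/L + 1$, although this refinement is not strictly required to obtain the stated bound.

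For \eqref{eq:lem10.2}, the substitution $s = u^c$ transforms $V$ into
\[ V(\theta; X) = \frac{\gamma}{c}\int_{(X/2)^c}^{X^c} s^{\gamma/c - 1} e(\theta s)\, ds, \]
exhibiting $V(\theta; X)$, up to reflection in $\theta$, as the Fourier transform of the function $g(s) = (\gamma/c) s^{\gamma/c - 1} \mathbf 1_{[(X/2)^c, X^c]}(s)$. Plancherel's theorem then yields
\[ \int_I |V(\theta; X)|^2\, d\theta \le \int_{\mathbb R} |V(\theta; X)|^2\, d\theta = \|g\|_2^2 \ll X^{2\gamma - c}, \]
which is in fact a factor of $L$ stronger than \eqref{eq:lem10.2}. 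The only real obstacle across all four parts is the careful bookkeeping of $L$-powers and the correct choice of cut-off $R_0$ so that the two halves of the off-diagonal split balance; no deep input is required beyond the elementary density count for $\mathcal N_\gamma$ and Plancherel's identity.
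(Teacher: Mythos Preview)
Your argument is correct and follows the same route as the paper's proof: expand the square and control the diagonal and off-diagonal contributions separately. Two minor differences are worth noting. For the off-diagonal in \eqref{eq:lem10.0} and \eqref{eq:lem10.0a}, the paper reparametrizes $n_i = \lfloor m_i^{1/\gamma}\rfloor$ with $m_i \sim X^\gamma$ and uses the gap estimate $n_2 - n_1 \gg X^{1-\gamma}(m_2-m_1)$ to obtain $\sum_{n_1<n_2}(n_2-n_1)^{-1} \ll X^{2\gamma-1}L$ in one stroke; this sidesteps the ``$+1$'' in your density count and hence removes the need for your cut-off $R_0$ and dyadic split, though both approaches yield the stated bound. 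For \eqref{eq:lem10.2} the paper only says that ``similar (and simpler) arguments'' apply and refers to Tolev~\cite{To92}; your Plancherel computation is a clean alternative and, as you observe, actually saves the stated factor of~$L$.
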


\begin{proof}
Consider \eqref{eq:lem10.0}. We have
\begin{align*}
\int_I |S(\theta; X)|^2 \, d\theta &= \sum_{p_1,p_2 \in \mathcal N_\gamma(X)}(\log p_1)(\log p_2) \int_I e(\theta(p_1^c - p_2^c)) \, d\theta \\
&\ll L^2\sum_{n_1, n_2 \in \mathcal N_\gamma(X)} \min\big( |I|, |n_1^c - n_2^c|^{-1} \big) \\
&\ll |I|X^{\gamma}L^2 + X^{1-c}L^2 \sum_{\substack{n_1, n_2 \in \mathcal N_\gamma(X)\\ n_1 < n_2}} (n_2 - n_1)^{-1}, 
\end{align*}
where we have used that $\#\mathcal N_\gamma(X) \ll X^{\gamma}$. We now write $n_i = \big\lfloor m_i^{1/\gamma} \big\rfloor$, with $m_i \sim X^{\gamma}$, and we deduce that 
\begin{align*}
\int_I |S(\theta; X)|^2 \, d\theta &\ll |I|X^{\gamma}L^2 + X^{1-c}L^2 \sum_{\substack{m_1, m_2 \sim X^\gamma\\ m_1 < m_2}} \big( m_2^{1/\gamma} - m_1^{1/\gamma} - 1 \big)^{-1}\\
&\ll |I|X^{\gamma}L^2 + X^{\gamma-c}L^2 \sum_{\substack{m_1, m_2 \sim X^\gamma\\ m_1 < m_2}} (m_2 - m_1)^{-1}\\
&\ll |I|X^{\gamma}L^2 + X^{2\gamma-c}L^3. 
\end{align*}

The proof of \eqref{eq:lem10.0a} is almost identical, and inequalities \eqref{eq:lem10.1} and \eqref{eq:lem10.2} can be proved using similar (and simpler) arguments. The reader can also consult Lemma 7 in \cite{To92} for variants of \eqref{eq:lem10.1} and \eqref{eq:lem10.2}. 
\end{proof}

\section{Proof of Theorem \ref{thm1}}

Let $s = 2t + 2u + 1$, where $t,u$ are integers to be chosen in terms of $c$ in due course. For a large $N$, we set
\begin{equation*}
X = N^{1/c}, \qquad X_0 = (3u)^{-1}X, \qquad X_1 = X, \qquad X_j = {\textstyle \frac 12} X_{j-1}^{1-1/c} \quad (2 \le j \le t).  
\end{equation*}
We use the Davenport--Heilbronn form of the circle method to count the solutions of \eqref{eq:thm1} in primes $p_1, \dots, p_s$ subject to
\begin{equation}\label{eq:1.3}
p_{1}, \dots, p_{2u+1} \in \mathcal N_\gamma(X_0), \qquad p_{2u+2j}, p_{2u+2j+1} \in \mathcal N_\gamma(X_j) \quad (1 \le j \le t).     
\end{equation}
Let us fix a kernel $K \in C^\infty(\mathbb R)$ such that
\begin{equation*}
\widehat K(t) \ge 0, \qquad {\textstyle \frac 14}\mathbf 1_I(4x) \le K(x) \le \mathbf 1_I(x), 
\end{equation*}
where $\mathbf 1_I$ is the indicator function of the interval $I = [-1, 1]$. We can ensure these conditions by choosing $K$ to be a convolution of the form $K = \widetilde{K} \star \widetilde{K}$, where $\widetilde{K} \in C^\infty(\mathbb R)$ is even and satisfies $\mathbf 1_I(4x) \le \widetilde{K}(x) \le \mathbf 1_I(2x)$. We consider the quantity
\begin{equation*}
R(N) = \sum_{p_1, \dots, p_s :  \eqref{eq:1.3}} \bigg\{ \prod_{j=1}^s (\log p_j) \bigg\}  K_\tau(p_1^c + \dots + p_s^c - N),
\end{equation*}
where $K_\tau(x) = K(x/\tau)$, $\tau = (\log N)^{-1}$.  By Fourier inversion,
\begin{equation}\label{eq:2.3}
R(N) = \int_{\mathbb R} \mathcal F_1(\theta)e(-N\theta) \, \dtheta,
\end{equation}
where $\dtheta = \widehat{K_\tau}(\theta) \, d\theta$ and
\[ \mathcal F_j(\theta) = S(\theta; X_0)^{2u+j} S(\theta; X_1)^2 \cdots S(\theta; X_t)^2 \qquad (j=0,1). \]
We analyze the last integral to show that
\begin{equation} \label{eq:46}
R(N) \gg \tau (X_1^2 \cdots X_t^2X^{2u+1})^{\gamma} X^{-c} = \Xi, \qquad \text{say}.
\end{equation}

\subsection{The trivial region}

We first estimate the contribution of large $\theta$ to the integral in \eqref{eq:2.3}. Let $\delta = \delta(c,\gamma) > 0$ be a sufficiently small fixed number. Because of the compact support of the kernel $K$, we have
\[ \widehat{K_\tau}(\theta) = \tau\widehat{K}(\tau\theta) \ll_j \frac {\tau}{(1 + \tau|\theta|)^{j+2}}. \]
Hence, if we fix $j \ge (c+1)\delta^{-1}$, we have
\begin{equation}\label{eq:47}
\int_{|\theta| \ge X^{\delta}} \big| \mathcal F_1(\theta) \big| \, \dtheta \ll \int_{|\theta| \ge X^{\delta}} \frac {\Xi X^c \, d\theta}{(1 + \tau|\theta|)^{j+2}} \ll \Xi X^{-1}.
\end{equation}
 
\subsection{The minor arcs}
The set of ``minor arcs'' is
\[ \mathfrak m = \big\{ \theta : X^{\gamma-c-\delta} \le |\theta| \le X^{\delta} \big\}. \]
From Lemma \ref{lem:S1a}, we have 
\begin{equation}\label{eq:5.1} \sup_{\theta \in \mathfrak m} |S(\theta; X_0)| \ll X^{1-\rho+\eps}, 
\end{equation}
with $\rho = (8c^2 + 12c + 12)^{-1}$. On the other hand,
\[ \int_{\mathfrak m} |\mathcal F_0(\theta)| \, \dtheta \le \int_{\mathbb R} |\mathcal F_0(\theta)| \, \dtheta, \]
and the last integral is bounded by $(\log X)^{s-1}$ times the number of solutions of the Diophantine inequality
\begin{equation}\label{eq:5.4}
\left| x_1^c - x_2^c + x_3^c - x_4^c + \dots + x_{s-2}^c - x_{s-1}^c \right| < \tau
\end{equation}
in integers $x_1, \dots, x_{s-1}$ subject to
\begin{equation}\label{eq:5.5}
x_1, \dots, x_{2u} \in \mathcal N_{\gamma}(X_0), \qquad x_{2u+2j-1}, x_{2u+2j} \in \mathcal N_{\gamma}(X_j) \quad (1 \le j \le t).
\end{equation}
Thus,
\begin{equation}\label{eq:6.1}
\int_{\mathfrak m} |\mathcal F_0(\theta)| \, \dtheta \ll X^{\eps} \int_{\mathbb R} |\mathcal G(\theta)|^2 \, \dtheta[4\tau], 
\end{equation}
where
\[ \mathcal G(\theta) = T(\theta; X_0)^u \, T(\theta; X_1) \cdots T(\theta; X_t). \]

Similarly to \eqref{eq:47}, we have
\begin{equation}\label{eq:6.4}
\int_{|\theta| \ge X^{\delta}} |\mathcal G(\theta)|^2 \, \dtheta[4\tau] \ll \Xi X^{-1-\gamma}. 
\end{equation}
Moreover, applying \eqref{eq:lem10.0a} to $\mathfrak M = (-X^{\gamma-c-\delta}, X^{\gamma - c-\delta})$, we get
\begin{equation}\label{eq:7.1}
\int_{\mathfrak M} |\mathcal G(\theta)|^2 \, \dtheta[4\tau] \ll \Xi X^{c-3\gamma} \int_{\mathfrak M} |T(\theta; X_1)|^2 \, d\theta \ll \Xi X^{-\gamma+\eps}. 
\end{equation}

Finally, we estimate the contribution to the right side of \eqref{eq:6.1} from the minor arcs $\mathfrak m$. By Corollary \ref{lem:T}, we have
\[ \int_{\mathfrak m} |\mathcal G(\theta)|^2 \, \dtheta[4\tau] \ll X^{2u(1-\nu)+\eps} \int_{\mathbb R} \prod_{j=1}^t |T(\theta; X_j)|^2 \, \dtheta[4\tau], \]
where $\nu = (c^2 + 3c + 2)^{-1}$. The latter integral is bounded by the number of solutions of the inequality
\[ \left| x_1^c - x_2^c + x_3^c - x_4^c + \dots + x_{2t-1}^c - x_{2t}^c \right| < 4\tau \]
subject to $x_{2j-1}, x_{2j} \in \mathcal N_{\gamma}(X_j)$, $1 \le j \le t$. By a standard argument (see p. 71 of \cite{Va97}), this inequality has only diagonal solutions (i.e., those with $x_{2j-1} = x_{2j}$ for all $j$). Hence, 
\begin{equation}\label{eq:9.1}
\int_{\mathfrak m} |\mathcal G(\theta)|^2 \, \dtheta[4\tau] \ll X^{2u(1-\nu)+\eps} (X_1 \cdots X_t)^{\gamma} \ll \Xi X^{-\gamma + \Delta + \eps}, 
\end{equation}
where
\begin{align*} 
\Delta &= 2u(1 - \gamma - \nu) + c - \gamma \sum_{j=0}^{t-1}\left( 1 - \frac 1c \right)^j \\
&< (1-\gamma)(2u + c) + \gamma ce^{-t/c} - 2u\nu.
\end{align*}
We choose $t = \lceil 2c\log c \rceil$ and $u = \big\lceil \frac 23c +\frac 12 \big\rceil + 2$, recall that $0 < 1 - \gamma < \rho$, and get
\begin{align*}
 \Delta &< \left( \frac {7c}3 + 7 \right)\rho + \frac 1c - \left(\frac {4c}3+5\right)\nu \\
 &= -\frac {(c-3) (c^3 + 21c^2 + 22c + 24)}{12 c (c+1) (c+2) (2c^2 + 3c + 3)} < 0.   
\end{align*}  

Combining \eqref{eq:6.1}--\eqref{eq:9.1}, we deduce that
\begin{equation*}
\int_{\mathfrak m} |\mathcal F_0(\theta)| \, \dtheta \ll \Xi X^{-\gamma+\eps},
\end{equation*}
and hence, by \eqref{eq:5.1}, 
\begin{equation}\label{eq:55}
\int_{\mathfrak m} \big| \mathcal F_1(\theta) \big| \, \dtheta \ll \Xi X^{-\delta}. 
\end{equation}

\subsection{The major arc}

The major arc of the integral in \eqref{eq:2.3} is the open interval $\mathfrak M$ above. When $\gamma < 1$, $|\theta| < X^{\gamma-c}$ and $Y \sim X$, the argument of Lemma 14 in \cite{To92} yields the approximation
\begin{equation*}
\sum_{Y < p \le X} (\log p)e(\theta p^c) = \int_Y^X e(\theta u^c) \, du + O\big( X^{1-2\eta(X)} \big),
\end{equation*}
where $\eta (X) = (\log X)^{-3/4}$. Using partial summation, we deduce that
\begin{equation}\label{eq:50}
S_0(\theta; X_j) = V(\theta; X_j) + O\big( X_j^{\gamma - 2\eta(X)} \big) 
\end{equation}
for $j=0,1$. When $j \ge 2$, the same approximation follows from the Prime Number Theorem. Combining \eqref{2.1}, \eqref{eq:50} and Lemma \ref{lem:S1b}, we conclude that, for $\theta \in \mathfrak M$, one has 
\begin{equation}\label{eq:51}
S(\theta; X_j) = V(\theta; X_j) + O\big( X_j^{\gamma - 2\eta(X)} \big). 
\end{equation}

Let 
\[ \mathcal F^*(\theta) = V(\theta; X_0)^{2u+1} V(\theta; X_1)^2 \cdots V(\theta; X_t)^2.\]
By \eqref{eq:51}, we have
\[ \mathcal F_1(\theta) - \mathcal F^*(\theta) \ll \tau^{-1} \Xi X^{c-2\gamma - 2\eta(X)} \big( |S(\theta; X_0)|^2 + |V(\theta; X_0)|^2 \big) \] 
for all $\theta \in \mathfrak M$. Recalling \eqref{eq:lem10.1} and \eqref{eq:lem10.2}, we obtain that
\begin{align*}
\int_{\mathfrak M} \big( \mathcal F_1(\theta) - \mathcal F^*(\theta) \big) e(-N\theta) \, \dtheta \ll \Xi X^{c-2\gamma-2\eta(X)} X^{2\gamma - c}(\log X)^3 \ll \Xi X^{-\eta(X)}.
\end{align*}
Since Lemma 3.1 in \cite{GK91} gives
\begin{equation*}
V(\theta; X) \ll X^{\gamma-c}|\theta|^{-1},
\end{equation*}
we get
\begin{equation} \label{eq:58}
\int_{\mathfrak M} \mathcal F_1(\theta) e(-N\theta) \, \dtheta = \int_{\mathbb R} \mathcal F^*(\theta) e(-N\theta) \, \dtheta + O\big( \Xi X^{-\eta(X)} \big).
\end{equation}
Moreover, a standard Fourier integral argument (similar to the proof of Lemma 6 in \cite{To92}, for example) gives 
\begin{equation}\label{eq:59}
\int_{\mathbb R} \mathcal F^*(\theta)e(-N\theta) \, \dtheta \gg \Xi.
\end{equation}
The desired bound \eqref{eq:46} now follows from \eqref{eq:47}, \eqref{eq:55}, \eqref{eq:58} and \eqref{eq:59}.

\section{Proofs of Theorems \ref{thm2}, \ref{thm4} and \ref{thm3}}

\subsection{Theorems \ref{thm2} and \ref{thm3}}

Let $X = (N/2)^{1/c}$, $\tau = \log N$ and consider the quantity 
\[ R(N) = \sum_{p_1, \dots, p_s \in \mathcal N_{\gamma}(X)} \bigg\{ \prod_{j=1}^s (\log p_j) \bigg\}  K_\tau(p_1^c + \dots + p_s^c - N), \]
where $s = 3$ or $4$ and $K_\tau$ is the smooth kernel from the proof of Theorem \ref{thm1}. Let $\delta > 0$ be sufficiently small. Similarly to the proof of Theorem \ref{thm1}, we have
\[ R(N) = \int_{\mathbb R} S(\theta; X)^s e(-N\theta) \, \dtheta \]
and 
\begin{gather} 
\int_{|\theta| \ge X^{\delta}} |S(\theta; X)|^s \, \dtheta \ll \tau X^{s\gamma - c-1}, \notag \\
\int_{\mathfrak M} S(\theta; X)^se(-N\theta) \, \dtheta \gg \tau X^{s\gamma - c}, \label{eq:52}
\end{gather}
where $\mathfrak M = (-X^{\gamma-c-\delta},X^{\gamma-c-\delta})$. Thus, to complete the proof of one of Theorems \ref{thm2} or \ref{thm3}, one needs only establish the minor arc bound
\begin{equation}\label{eq:60} 
\int_{\mathfrak m} S(\theta; X)^se(-N\theta) \, \dtheta \ll \tau X^{s\gamma - c - \delta}, 
\end{equation}
where $\mathfrak m = \big\{ \theta : X^{\gamma - c - \delta} \le |\theta| \le X^{\delta} \big\}$. Recalling \eqref{eq:lem10.0}, we see that \eqref{eq:60} follows from the estimate
\begin{equation}\label{eq:61} 
\sup_{\theta \in \mathfrak m} |S(\theta; X)|^{s-2} \ll X^{(s-1)\gamma - c - 2\delta}. 
\end{equation}
As this bound follows from Corollary \ref{lem:S} when $s=3$ and from Corollary \ref{lem:Sa} when $s=4$, the proofs of the two theorems are complete. 

\subsection{Comments on the proof of Theorem \ref{thm4}}

When $s=2$ and $N \sim Z$, we can set $X = (2Z/3)^{1/c}$ and $\tau = (\log Z)^{-1}$ and then structure the proof similarly to the case $s=4$, replacing the pointwise bounds \eqref{eq:52} and \eqref{eq:60} with the mean-square inequalities
\begin{equation}\label{eq:62} 
\int_{Z/2}^Z \bigg| \int_{\mathfrak M} \big( S(\theta; X)^2 - V(\theta; X)^2 \big) e(-N\theta) \, \dtheta \bigg|^2 dN \ll \tau^2X^{4\gamma - c - \eta(X)} 
\end{equation}
and 
\begin{equation}\label{eq:63} 
\int_{Z/2}^Z \bigg| \int_{\mathfrak m} S(\theta; X)^2e(-N\theta) \, \dtheta \bigg|^2 dN \ll \tau^2X^{4\gamma - c - \delta}. 
\end{equation}
From these inequalities, we see immediately that the bounds \eqref{eq:52} and \eqref{eq:60} with $s = 2$ fail for a set of Lebesgue measure $\ll Z^{1-\eta(X)}$. To complete the proof, we remark that an appeal to Plancherel's theorem (see (4.6) in \cite{KL02}) deduces \eqref{eq:62} and \eqref{eq:63} from the basic estimates for $S(\theta;X)$ (e.g., the case $s=4$ of \eqref{eq:61}) that were used earlier to establish \eqref{eq:52} and \eqref{eq:60}.

\bibliographystyle{amsplain}
\providecommand{\bysame}{\leavevmode\hbox to3em{\hrulefill}\thinspace}
\providecommand{\href}[2]{#2}

\end{document}